\newcommand{\N}{\mathbb{N}}
\newcommand{\R}{\mathbb{R}}
\newcommand{\loc}{\textnormal{loc}}
\newcommand{\medint}{-\kern  -,375cm\int}
\newenvironment{michelarev}{\color{blue}}{\color{black}}
\newcommand{\bmicr}{\begin{michelarev}}
\newcommand{\emicr}{\end{michelarev}}
\newenvironment{samuelerev}{\color{blue}}{\color{black}}
\newcommand{\bsamr}{\begin{samuelerev}}
\newcommand{\esamr}{\end{samuelerev}}
\newenvironment{giacomorev}{\color{green}}{\color{black}}
\newcommand{\bgiar}{\begin{giacomorev}}
\newcommand{\egiar}{\end{giacomorev}}
\theoremstyle{plain}
\newtheorem{theorem}{Theorem}[section]
\newtheorem{corollary}[theorem]{Corollary}
\newtheorem{lemma}[theorem]{Lemma}
\newtheorem{remark}[theorem]{Remark}
\newtheorem*{rmk*}{Remark}
\theoremstyle{definition}
\theoremstyle{remark}
\numberwithin{equation}{section} \makeatletter
\renewcommand{\p@enumi}{\thesection.}
\makeatother \pagestyle{myheadings} \allowdisplaybreaks
\begin{document}
\title{Regularity for obstacle problems \\ without structure conditions}
\author[G. Bertazzoni, S. Ricc\`o]{Giacomo Bertazzoni, Samuele Ricc\`o}
\address{Dipartimento di Scienze Fisiche, Informatiche e Matematiche, via Campi 213/b, 41125 Mo-dena, Italy}


\begin{abstract}

This paper deals with the Lipschitz regularity of minimizers for a class of variational obstacle problems with possible occurance of the Lavrentiev phenomenon. In order to overcome this problem, the availment of the notions of relaxed functional and Lavrentiev gap are needed. The main tool used here is a ingenious Lemma which reveals to be crucial because it allows us to move from the variational obstacle problem to the relaxed-functional-related one. This is fundamental in order to find the solutions' regularity that we intended to study. We assume the same Sobolev regularity both for the gradient of the obstacle and for the coefficients.
\end{abstract}

\maketitle

\begin{center}
\fbox{\today}
\end{center}


\section{Introduction}
\label{intro}

In this manuscript we study the Lipschitz continuity of the solutions to variational obstacle problems of the form
\begin{equation}
\label{obst-def0}
\min \, \left\{ \int_\Omega f(x,Dw) : w \in \mathcal{K}_\psi(\Omega) \right\}
\end{equation}
in the case of $p,q-$growth condition, where $\Omega$, $f$ and $\mathcal{K}_\psi(\Omega)$ will be specified below and where we assume that Lavrentiev phenomenon may occur.
\\
The relationship between the ellipticity and the growth exponent we impose, namely \eqref{coeff_eq}, is the one considered for the first time in the series of papers \cite{EMM16_a}, \cite{EMM16_b}, \cite{EMM19}, \cite{EMM3} and it is sharp (in view of the well known counterexamples, see for instance \cite{M87}) also to obtain the Lipschitz continuity of solutions to elliptic equations and systems and minimizers of related functionals with $p,q-$growth. Regularity results under non standard growth conditions, a research branch started after the pioneering papers by Marcellini \cite{M89}, \cite{M91}, \cite{M93}, has recently attracted growing attention, see among the others \cite{BS19}, \cite{BDM13}, \cite{CM15-1},
\cite{DFP19}, \cite{HH19}, \cite{M19}, \cite{M20}, \cite{RT19}.
\\
The term \textit{Lavrentiev phenomenon} refers to a surprising result first demonstrated in 1926 by M. Lavrentiev in \cite{L26}. There it was shown that it is possible for the variational integral of a two-point Lagrange problem, which is sequentially weakly lower semicontinuous on the admissible class of absolutely continuous functions, to possess an infimum on the dense subclass of $\mathcal{C}^1$ admissible functions that is \textit{strictly greater} than its minimum value on the full admissible class. Since the paper of Lavrentiev indeed, many contributions appeared in this direction, see \cite{BB}, \cite{BM}, \cite{ELM04}, \cite{ELP}, \cite{Zhikov}.
\\
In our case, it is still possible to have occurrence of Lavrentiev phenomenon due to the nonstandard growth conditions required on the lagrangian.  In this respect, under our assumptions, the Lavrentiev phenomenon can be reformulated in these terms:
\begin{equation}
\label{def-lavr}
\inf_{w \in (W^{1,p} \, \cap \, \{ w \ge \psi \})} \int_\Omega f(x, Dw) \, dx \, < \, \inf_{w \in (W^{1,q} \, \cap \, \{ w \ge \psi \})} \int_\Omega f(x, Dw) \, dx
\end{equation}
So, the aim of this work is to complement the results cointained in the paper \cite{CEP19}, where authors obtain Lipschitz regularity results for obstacle problems with Sobolev regularity for the coefficients and where the lagrangian $f$ satisfies $p,q-$growth conditions without assuming that Lavrentiev phenomenon may occur. 
\\
This phenomenon is a clear obstruction to regularity, since \eqref{def-lavr} prevents minimizers to belong to $W^{1,q}$. Notice that \eqref{def-lavr} cannot happen if $p = q$ or if $f$ is autonomous (it not depends on variable $x$) and convex. Moreover, as pointed out in Section 3 of \cite{ELM04}, the appearance of \eqref{def-lavr} has geometrical reasons and cannot be spotted in a direct way by standard elliptic regularity techniques. Therefore, the basic strategy in getting regularity results consists in excluding the occurrence of \eqref{def-lavr} by imposing that the Lavrentiev gap functional $\mathcal{L}(u)$, defined in \eqref{gap}, vanishes on solutions.
\\
However, here in this manuscript we adopt a different viewpoint, following the lines of \cite{EMM3}. We present a general Lipschitz regularity result by covering the case in which the Lavrentiev phenomenon may occur. In this respect, a key role will be played by the relaxed functional. We therefore need to introduce the exact framework of relaxation in the case of obstacle problems and then we will state our main result. The crucial step will be constituted by Lemma \ref{conv-funz} which is the natural counterpart of the necessary and sufficient condition to get the absence of Lavrentiev phenomenon.
\\
We will state and prove the result with Sobolev dependence on both the obstacle and the partial map $x \mapsto D_{\xi} f(x,\xi)$. A model functional that is covered by our results is
\begin{equation*}
w \mapsto \int_\Omega \left[ |Dw|^p + a(x) \, (1 + |Dw|^2)^\frac{q}{2} \right] \, dx
\end{equation*}
with $q > p > 1$ and $a(\cdot)$ a bounded Sobolev coefficient.
\\
The plan of the paper is the following: in Section \ref{state} we state our model problem and the main results of the paper, in Section \ref{prelim} we present some preliminary results we need in the sequel; Section \ref{a-priori} is devoted to the presentation of our a-priori estimate and finally in Section \ref{approx-lavr} we present the Lipschitz regularity results for solutions to the relaxed obstacle problem. 


\section{Statement of the problem and of the main results} 

\label{state}

Let us present in details the setting of our problem.
\\
Here $\Omega$ is a bounded open set of $\mathbb{R}^n$, $n \ge 2$. We will deal with variational integral
\begin{equation}
\label{main-F}
\mathcal{F}(u) := \int_\Omega f(x, Du) \, dx
\end{equation}
where $f: \Omega \times \mathbb{R}^n \rightarrow [0, + \infty)$ is a Carath\'eodory function which is convex and of class $\mathcal{C}^2$ with respect to the second variable. We consider $p$ and $q$ bounded by
\begin{equation}
\label{coeff_eq}
1 \le \frac{q}{p} < 1 + \frac{r - n}{r \, n} = 1 + \frac{1}{n} - \frac{1}{r}
\end{equation}
where $r>n$, so $\frac{1}{r} < \frac{1}{n}$ and where we consider $q > p \ge \, 2$. 
We suppose that there exist two positive constants $\nu$, $L$ and a function $h: \Omega \rightarrow [0, + \infty)$ such as $h(x) \in L^r_\loc(\Omega)$ such that
\begin{equation}
\label{HpF1}
\nu \, (1 + |\xi|^2)^\frac{p}{2} \le f(x, \xi) \le L \, (1 + |\xi|^2)^\frac{q}{2} \\[3.7mm]
\end{equation}
\begin{equation}
\label{HpF2}
\nu \, (1 + |\xi|^2)^\frac{p - 2}{2} \, |\lambda|^2 \le \sum_{i,j} f_{\xi_i \xi_j}(x, \xi) \, \lambda_i \, \lambda_j \le L \, (1 + |\xi|^2)^\frac{q-2}{2} \, |\lambda|^2  \\
\end{equation}
\begin{equation}
\label{HpF3}
|f_{x \xi}(x, \xi)| \le h(x) \, (1 +|\xi|^{2})^\frac{q-1}{2}
\end{equation}
\\
for all $\lambda, \xi \in \mathbb{R}^n$, $\lambda = \lambda_i$, $\xi = \xi_i$, $i = 1, 2, \dots, n$, a.e. in $\Omega$.
\\
\\
Furthermore, we give meaning of our obstacle problem \eqref{obst-def0}. Namely, the function $\psi : \, \Omega \rightarrow [- \infty, + \infty)$, called \textit{obstacle}, belongs to the Sobolev space $W^{1, p}(\Omega)$ and the class $\mathcal{K}_\psi (\Omega)$ is defined as follows
\begin{equation}
\label{classeA}
\mathcal{K}_\psi(\Omega) := \left\{ w \in u_0 + W^{1,p}_0(\Omega) : w \ge \psi \,\, \textnormal{a.e. in $\Omega$} \right\}
\end{equation}
where $u_0$ is a fixed boundary value. In order to prove Theorem \ref{lip-solution} we will need to assume $u_0 \in W^{1,q}(\Omega)$. 

\noindent
To avoid trivialities, in what follows we shall assume that $\mathcal{K}_\psi$ is not empty. We also assume that a solution to \eqref{obst-def0} is such that $f(x, Du) \in L^1_\loc(\Omega)$. As it has been shown in \cite{EPdN21}, in case of non-standard growth condition (at least in the autonomous case), this turns to be the right class of competitors.
\\
\begin{remark}
\label{oss-classe-vuota}
Let us notice that, by replacing $u_0$ by $\tilde{u}_0 = \max \, \{u_0, \psi\}$, we may assume that the boundary value function $u_0$ satisfies $u_0 \ge \, \psi$ in $\Omega$. Indeed $\tilde{u}_0  = (\psi - u_0)^+ + u_0$ and since
\begin{equation*}
0 \le (\psi - u_0)^+ \le (u - u_0)^+ \in W^{1,1}_0(\Omega),
\end{equation*}
the function $(\psi \, - \, u_0)^+$, and hence $u \, - \, \tilde{u}_0$, belongs to $W^{1,1}_0(\Omega)$. Moreover assumptions $f(x, Du) \in L^1_\loc(\Omega)$ and $f(x, Du_0) \in L^1_\loc(\Omega)$ imply $f(x, D \tilde u_0) \in L^1_\loc(\Omega)$. Indeed we have
\begin{eqnarray*}
\int_\Omega f(x, D\tilde u_0) \, dx
&=&\int_{\Omega \, \cap \, \{u_0 \ge \psi\}} f(x, Du_0) \, dx + \int_{\Omega \, \cap \, \{u_0 < \psi \}} f(x, D\psi) \, dx \\
&\le& \int_\Omega \left[ f(x, Du_0) + f(x, D\psi) \right] \, dx \, < \, +\infty	
\end{eqnarray*}
where we used that $f(x, \xi) \ge 0$, by virtue of the left inequality in \eqref{HpF1}.
\end{remark}

Throughout the paper we will denote by $B_\rho$ and $B_R$ balls of radii respectively $\rho$ and $R$ (with $\rho < R$) compactly contained in $\Omega$ and with the same center, let us say $x_0 \in \Omega$. Moreover in the sequel, constants will be denoted by $C$, regardless their actual value. Only the relevant dependencies will be highlighted.
\\
\\
Our first main result is an a-priori estimate and reads as follows
\begin{theorem}[A priori estimate]
\label{main-result}
Let $u \in \mathcal{K}_\psi(\Omega)$ be a smooth solution to the obstacle problem \eqref{obst-def0} under the assumptions \eqref{HpF1}, \eqref{HpF2}, \eqref{HpF3} and \eqref{coeff_eq}. 
If $\psi \in W^{2, r}_\loc(\Omega)$, then $u \in W^{1, \infty}_\loc(\Omega)$ and the following estimate 
\begin{equation}
\label{a-priori}
\|Du\|_{L^\infty(B_\rho)} \le C \, \left\{ \int_{B_R} [1 + f(x, Du)] \, dx \right\}^\beta
\end{equation}
holds for every $0 < \rho < R$ and with positive constants $C$ and $\beta$ depending on $n, r, p, q, \nu, L, R, \rho$ and on the local bounds for $\|D \psi\|_{W^{1, r}}$ and $\|h\|_{L^r}$. 
\end{theorem}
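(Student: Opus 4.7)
The plan is to adapt the Moser-type iteration scheme that is standard in the $p,q$-growth theory (as in \cite{EMM16_a,EMM16_b,EMM19,EMM3,CEP19}) to the obstacle problem \eqref{obst-def0}. Since $u$ is already assumed to be a smooth solution, everything proceeds at the a priori level and rests on two pillars: first, a second-order Caccioppoli-type inequality for $Du$; second, a Moser iteration bootstrapping the resulting gain of integrability into the pointwise $L^\infty$ bound claimed in \eqref{a-priori}.

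The starting point is the variational inequality
\[
\int_\Omega \langle Df(x, Du), D\varphi \rangle \, dx \ge 0
\]
for every admissible perturbation $\varphi = w-u$ with $w \in \mathcal{K}_\psi$. To be able to differentiate formally in the direction $x_s$ and still respect the one-sided constraint, I would build test fields around the auxiliary variable $v := u - \psi \ge 0$. A natural admissible choice, in the spirit of \cite{CEP19}, is
\[
\varphi = \partial_s \bigl[ \eta^{2(1+\alpha)} \, \Phi(|Du|^2) \, (\partial_s u - \partial_s \psi) \bigr],
\]
where $\eta \in C_c^\infty(B_R)$ is a radial cutoff, $\Phi(t) = \min\{(1+t)^\alpha, k\}$ is a truncation and $\alpha \ge 0$ is the iteration parameter. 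The factor $\partial_s u - \partial_s \psi = \partial_s v$ vanishes on the (interior of the) coincidence set $\{u=\psi\}$, so that a two-sided variation becomes admissible and one recovers an Euler--Lagrange-type identity, up to corrections carrying $D^2\psi$.

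Expanding this identity, the principal term coming from $f_{\xi_i \xi_j}$ is bounded from below through the ellipticity \eqref{HpF2}; the $x$-dependence term, involving $f_{x_s \xi_i}$, is controlled via \eqref{HpF3} and Young's inequality, producing a factor of the form $h(x)(1+|Du|^2)^{(q-1)/2}$; the new obstacle contribution, after an integration by parts, is of the form $D^2\psi \cdot (1+|Du|^2)^{(q-1)/2}\, \eta^{2(1+\alpha)}$ and is handled using $\psi \in W^{2,r}_\loc$. Combined, these three estimates are intended to produce a Caccioppoli-type bound of the schematic form
\[
\int \eta^{2(1+\alpha)} (1+|Du|^2)^{\frac{p-2}{2}+\alpha} |D^2 u|^2 \, dx \le C \int_{B_R} (1+|Du|^2)^{\frac{q}{2}+\alpha} \bigl( |D\eta|^2 + (h^2 + |D^2\psi|^2)\,\eta^2 + 1 \bigr) \, dx.
\]

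Finally, Sobolev embedding turns the left-hand side (which is of the order $|D((1+|Du|^2)^{\gamma/2}Du)|^2$ for $\gamma = \tfrac{p-2}{2} + \alpha$) into a higher-integrability estimate, while H\"older's inequality exploits $r > n$ to absorb the $h$ and $D^2 \psi$ factors with a better exponent. Iterating over nested balls $B_{\rho_k}$ with $\rho_k \downarrow \rho$ and with a geometric sequence of parameters $\alpha_k \to \infty$, the sharp assumption \eqref{coeff_eq} is exactly what makes the iteration self-improving, producing the $L^\infty$ bound \eqref{a-priori} with an explicit exponent $\beta$ depending only on $n,r,p,q$. The hardest step, I expect, is securing admissibility of the test function in the presence of the obstacle while keeping the $D^2\psi$-correction in a form that can be reabsorbed; once the Caccioppoli inequality above is in place, the iteration reduces essentially to the one used in the unconstrained $p,q$-growth setting.
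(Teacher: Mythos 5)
The paper handles the obstacle quite differently from you. Rather than folding the constraint into the test function, Section~\ref{lineariz} first \emph{linearizes} the problem in the Fuchs--Duzaar spirit: testing the variational inequality with $\varphi = u + t\,\eta\, h_\varepsilon(u-\psi)$ for a one-sided cutoff $h_\varepsilon$ supported near $\{u=\psi\}$ produces, via the Riesz representation theorem, a non-negative measure, and sending $\varepsilon\downarrow 0$ identifies it with $g := -\div\bigl(D_\xi f(x,Du)\bigr)\chi_{[u=\psi]}$. The decisive observation is that $g \in L^r_\loc(\Omega)$, because $Du = D\psi$ a.e.\ on the contact set and $D\psi \in W^{1,r}_\loc$. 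One is thus left with the \emph{unconstrained} weak equation $\int_\Omega D_\xi f(x,Du)\cdot D\eta\,dx = \int_\Omega g\,\eta\,dx$, and the second-variation test function is then the plain $\varphi = \eta^2(1+|Du|^2)^\gamma u_{x_s}$ --- no $v = u-\psi$ factor, no truncation $\Phi$ --- with the obstacle entering the Moser iteration only through the single multiplicative constant $\Theta = 1 + \|g\|^2_{L^r} + \|h\|^2_{L^r}$. This is the structural payoff of the linearization: the iteration itself is identical to the unconstrained $(p,q)$-case.

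Your route is in the spirit of~\cite{CEP19} and~\cite{G-stime}, but as written it has a gap at the admissibility step. The field $\varphi = \partial_s\bigl[\eta^{2(1+\alpha)}\Phi(|Du|^2)\,\partial_s v\bigr]$ involves $D^2u$ and $D^2\psi$, and the fact that $\partial_s v = 0$ a.e.\ on $\{u=\psi\}$ does \emph{not} yield $u \pm t\varphi \ge \psi$ for small $t$; $\varphi$ neither vanishes on a neighbourhood of the contact set nor has a sign there, so the ``two-sided variation'' is not legitimate as stated. Making it rigorous requires either difference quotients with one-sided truncations of $u-\psi$ (which is what~\cite{G-stime} actually does, and what the paper imports via Theorem~\ref{teo-chiara}) or the linearization above. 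You also pass over the closing step of the iteration: the Caccioppoli inequality produces the factor $\|1+|Du|^2\|_{L^\infty}^\sigma$ on the right-hand side with $\sigma = q - \tfrac{p}{2} - \tfrac{p}{2m}$, whose cumulative exponent $E$ must be shown to be strictly less than $1$ --- this is precisely where \eqref{coeff_eq} enters --- before Young's inequality and Lemma~\ref{holf} can absorb it and yield \eqref{a-priori}.
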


Now we want to present a meaningful definition of relaxation for problem \eqref{obst-def0} in the spirit of \cite{ABF03}, \cite{EMM3}, \cite{ELM04}, \cite{M86}. Inspired by \cite{DF19} and \cite{G21}, we consider the set
\begin{equation*}
\mathcal{K}^*_\psi(\Omega) \, := \, \{ w \in u_0 + W^{1,q}_0(\Omega) : w \ge \psi \,\, \textnormal{a.e. in $\Omega$} \}
\end{equation*}
Then, in spirit of \cite{ABF03} and \cite{M86}, we introduce the relaxed functional
\begin{equation}
\label{relax}
\overline{\mathcal{F}}(u) \, := \, \inf_{\mathcal{C}(u)} \{ \liminf_{j \rightarrow +\infty} \mathcal{F}(u_j) \}
\end{equation}
where $\mathcal{F}$ is defined in \eqref{main-F} and where
\begin{equation}
\label{classe_seq}
\mathcal{C}(u) \, := \, \left\{ \{ u_j\}_{j \in \N} \subset \mathcal{K}^*_\psi(\Omega) : u_j \rightharpoonup u \,\, \textnormal{weakly in $W^{1,p}(\Omega)$} \right\}
\end{equation}
and consider the \textit{Lavrentiev gap}
\begin{equation}
\label{gap}
\mathcal{L}(u) :=
	\begin{cases}
		\overline{\mathcal{F}}(u) - \mathcal{F}(u) \qquad & \text{if $\, \mathcal{F}(u) < \infty$} \\
		0 \qquad & \text{if $\, \mathcal{F}(u) = \infty$}
	\end{cases}
\end{equation}

Then, the Lipschitz regularity result we are able to prove is the following. 

\begin{theorem}
\label{lip-solution}
Assume that $f$ satisfies the hypothesis \eqref{HpF1}, \eqref{HpF2}, \eqref{HpF3} and \eqref{coeff_eq}. The Dirichlet problem
\begin{equation*}
\min \, \{ \overline{\mathcal{F}}(u) : u \in \mathcal{K}_\psi(\Omega) \}
\end{equation*}
with $\overline{\mathcal{F}}$ defined as in \eqref{relax} and $u_0 \in W^{1, q}(\Omega)$, has at least one locally Lipschitz continuous solution.
\end{theorem}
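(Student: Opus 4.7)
The plan is to combine a direct-method existence argument for $\overline{\mathcal F}$ with an approximation scheme producing smooth competitors to which the a-priori estimate of Theorem \ref{main-result} applies, and then to pass to the limit invoking Lemma \ref{conv-funz} to identify the limit as a locally Lipschitz minimizer of the relaxed problem. As a first step I would establish existence of \emph{some} minimizer of $\overline{\mathcal F}$ on $\mathcal K_\psi(\Omega)$ by the direct method: the lower bound in \eqref{HpF1} is inherited by the relaxation and yields $W^{1,p}$-coercivity on $u_0+W^{1,p}_0(\Omega)$; sequential weak lower semicontinuity in $W^{1,p}$ follows directly from definition \eqref{relax} by a standard diagonal argument on recovery sequences; the class $\mathcal K_\psi(\Omega)$ is weakly closed in $W^{1,p}(\Omega)$ (the obstacle condition passes to a.e. limits via Rellich) and nonempty thanks to Remark \ref{oss-classe-vuota} together with $u_0 \in W^{1,q}(\Omega) \subset W^{1,p}(\Omega)$.

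Next, I would produce a Lipschitz candidate by regularization. With a symmetric mollifier $\phi_\eps$, set
\begin{equation*}
f_\eps(x,\xi) := (f(\cdot,\xi) * \phi_\eps)(x) + \eps\,(1+|\xi|^2)^{q/2}, \qquad \psi_\eps := \psi * \phi_\eps,
\end{equation*}
so $f_\eps$ has standard $q,q$-growth (with $\eps$-dependent constants) yet still verifies \eqref{HpF1}--\eqref{HpF3} with constants uniform on compact subsets of $\Omega$ (for \eqref{HpF3} one uses $\|h*\phi_\eps\|_{L^r} \le \|h\|_{L^r}$), and $\psi_\eps \to \psi$ strongly in $W^{2,r}_\loc(\Omega)$. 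Classical regularity for standard-growth obstacle problems furnishes smooth minimizers $u_\eps$ of $w \mapsto \int f_\eps(x,Dw)\,dx$ in the class $\{w \in u_0 + W^{1,q}_0(\Omega) : w \ge \psi_\eps\}$, and Theorem \ref{main-result} applied to $u_\eps$ gives, for $B_\rho \Subset B_R \Subset \Omega$,
\begin{equation*}
\|Du_\eps\|_{L^\infty(B_\rho)} \le C\left\{\int_{B_R} [1+f_\eps(x,Du_\eps)]\,dx\right\}^\beta,
\end{equation*}
with $C,\beta$ depending on the structural data and $\rho,R$ but not on $\eps$.

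The main obstacle, and the place where the ingenious Lemma \ref{conv-funz} does the crucial work, is controlling the right-hand side uniformly in $\eps$ by $\overline{\mathcal F}(u)$, with $u$ the candidate from the first step. Taking a recovery sequence $\{v_j\} \subset \mathcal K^*_\psi(\Omega)$ with $v_j \rightharpoonup u$ in $W^{1,p}$ and $\mathcal F(v_j) \to \overline{\mathcal F}(u)$, the competitors $\tilde v_j^\eps := \max\{v_j,\psi_\eps\}$ are admissible for the $\eps$-problem. Minimality of $u_\eps$ then gives $\int f_\eps(x,Du_\eps)\,dx \le \int f_\eps(x,D\tilde v_j^\eps)\,dx$, and Lemma \ref{conv-funz} --- the natural counterpart of the standard necessary and sufficient condition for absence of the Lavrentiev phenomenon --- identifies the $\eps \to 0$ limit of these smoothed integrals with $\mathcal F(v_j)$, yielding $\limsup_{\eps \to 0}\int f_\eps(x,Du_\eps)\,dx \le \mathcal F(v_j)$; letting $j \to \infty$ produces $\limsup_{\eps \to 0}\int f_\eps(x,Du_\eps)\,dx \le \overline{\mathcal F}(u) < +\infty$. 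Together with the previous display this delivers a uniform $W^{1,\infty}_\loc$ bound on $\{u_\eps\}$.

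To conclude, Banach--Alaoglu and Rellich extract a subsequence with $u_\eps \to \bar u$ weakly-$*$ in $W^{1,\infty}_\loc$ and strongly in $L^p_\loc$; the Lipschitz bound transfers to $\bar u$, the constraint $\bar u \ge \psi$ follows from $u_\eps \ge \psi_\eps$ together with $\psi_\eps \to \psi$ a.e., and the boundary datum is preserved since each $u_\eps - u_0 \in W^{1,q}_0 \subset W^{1,p}_0$. Using $u_\eps$ (up to a vanishing correction to land in $\mathcal K^*_\psi$) as a test sequence in the definition \eqref{relax} of $\overline{\mathcal F}(\bar u)$, and combining with the lower semicontinuity of $\overline{\mathcal F}$ established in the first step, one obtains $\overline{\mathcal F}(\bar u) \le \liminf_\eps \int f_\eps(x,Du_\eps)\,dx \le \overline{\mathcal F}(u)$ for every admissible $u$, so $\bar u$ is the desired locally Lipschitz minimizer.
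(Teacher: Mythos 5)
Your overall strategy---approximate, invoke the a-priori estimate uniformly, pass to the limit and use Lemma \ref{conv-funz} to relate the limit to the relaxed functional---is in the same family as the paper's argument, and the two key ingredients (Theorem \ref{main-result} and Lemma \ref{conv-funz}) are both present. However, there are two substantive points where your route diverges from the paper and where you would need to fill genuine gaps. First, you mollify the obstacle, setting $\psi_\eps := \psi * \phi_\eps$ and solving the $\eps$-problem under the constraint $w \ge \psi_\eps$; the paper instead solves the approximating problems $\inf\{\int_\Omega f^{lk}(x,Dw)\,dx : w \in \mathcal{K}^*_\psi(\Omega)\}$ with the \emph{original} obstacle $\psi$, which is legitimate because the a-priori estimate only needs $\psi \in W^{2,r}_\loc$ and the $(q,q)$-growth of $f^{lk}$ already yields the regularity of $u^{lk}$ required to justify \eqref{second_variation}. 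Regularizing $\psi$ is not needed and creates real difficulties: $\psi_\eps$ is only defined in the interior $\{\textnormal{dist}(x,\partial\Omega) > \eps\}$, so the competitors $\tilde v_j^\eps = \max\{v_j,\psi_\eps\}$ need not lie in $u_0 + W^{1,q}_0(\Omega)$ without a boundary argument you do not supply, and the final ``vanishing correction to land in $\mathcal{K}^*_\psi$'' is likewise left unjustified.

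Second, you attribute to Lemma \ref{conv-funz} the convergence $\int_\Omega f_\eps(x, D\tilde v_j^\eps)\,dx \to \mathcal{F}(v_j)$ as $\eps \to 0$. Lemma \ref{conv-funz} says nothing about mollified integrals: it only asserts the existence, for each $u \in \mathcal{K}_\psi(\Omega)$, of a recovery sequence $u_k \in \mathcal{K}^*_\psi(\Omega)$ with $\mathcal{F}(u_k) \to \overline{\mathcal{F}}(u)$. The convergence you actually need here is a dominated-convergence fact for the fixed $W^{1,q}$-function $v_j$ under the $q$-growth of $f_\eps$; the paper establishes its analogue by the Lebesgue theorem after letting $\Omega_0 \uparrow \Omega$. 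In the paper, Lemma \ref{conv-funz} enters at a different place: after showing $\overline{\mathcal{F}}(\overline{u}) \le \int_\Omega f(x,Dw)\,dx$ for all $w \in \mathcal{K}^*_\psi(\Omega)$, it is used to pass from the $W^{1,q}$-class to the full class, yielding $\overline{\mathcal{F}}(\overline{u}) \le \overline{\mathcal{F}}(v)$ for all $v \in \mathcal{K}_\psi(\Omega)$. Finally, note that your opening direct-method step is redundant: the comparison chain you set up already gives $\overline{\mathcal{F}}(\bar u) \le \overline{\mathcal{F}}(u)$ for \emph{every} $u \in \mathcal{K}_\psi(\Omega)$ without knowing in advance that a minimizer exists, which is precisely how the paper concludes. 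The paper's separation into a two-parameter family $f^{lk}$ (mollification in $l$, standard-growth regularization in $k$, with the limits taken in that order) also keeps the passage to the limit cleaner than the single-parameter scheme you propose.
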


\section{Preliminary results}

\label{prelim}

First of all, we state the following lemma which has important application in the so called hole-filling method. Its proof can be found for example in \cite[Lemma 6.1]{Giusti}.
\begin{lemma}
\label{holf}
Let $h : [\rho_0, R_0] \to \mathbb{R}$ be a non-negative bounded function and $0 < \vartheta < 1$, $A, B \ge 0$ and $\beta > 0$. Assume that
\begin{equation*}
h(s) \le \vartheta \, h(t) + \frac{A}{(t - s)^\beta} + B
\end{equation*}
for all $\rho_0 \le s < t \le R_0$. Then
\begin{equation*}
h(r) \le \frac{c \, A}{(R_0 - \rho_0)^\beta} + c \, B
\end{equation*}
where $c = c(\vartheta, \beta) > 0$.
\end{lemma}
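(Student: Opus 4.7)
The plan is to iterate the inequality along a geometric sequence of radii tending to $R_0$, choosing the common ratio so that the iteration both kills the $h(t)$ term (since $h$ is bounded and $\vartheta<1$) and produces a convergent geometric series on the right-hand side.

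Concretely, fix $r \in [\rho_0, R_0)$ and pick $\tau \in (0,1)$ to be chosen below. I would set
\begin{equation*}
\rho_i := r + (1 - \tau^i)(R_0 - r), \qquad i = 0, 1, 2, \dots,
\end{equation*}
so that $\rho_0 = r$, $\rho_i \nearrow R_0$, and $\rho_{i+1} - \rho_i = (1-\tau)\tau^i (R_0-r)$. Applying the hypothesis with $s = \rho_i < t = \rho_{i+1}$ yields
\begin{equation*}
h(\rho_i) \le \vartheta\, h(\rho_{i+1}) + \frac{A}{(1-\tau)^\beta \tau^{i\beta}(R_0-r)^\beta} + B.
\end{equation*}
Iterating this bound $k$ times I obtain
\begin{equation*}
h(r) = h(\rho_0) \le \vartheta^k h(\rho_k) + \sum_{i=0}^{k-1} \vartheta^i \left[ \frac{A}{(1-\tau)^\beta \tau^{i\beta}(R_0-r)^\beta} + B \right].
\end{equation*}

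Now I would choose $\tau \in (0,1)$ so that $\vartheta\, \tau^{-\beta} < 1$; it suffices to pick any $\tau$ with $\tau^\beta > \vartheta$, for instance $\tau = \vartheta^{1/(2\beta)}$ (since $\vartheta < 1$ gives $\tau < 1$). With this choice the geometric series $\sum_{i\ge 0} (\vartheta\tau^{-\beta})^i$ converges, while $\sum_{i\ge 0}\vartheta^i = (1-\vartheta)^{-1}$ controls the $B$ contribution. Because $h$ is bounded on $[\rho_0,R_0]$ and $\vartheta^k\to 0$, the leading term $\vartheta^k h(\rho_k)$ vanishes as $k\to\infty$. Passing to the limit gives
\begin{equation*}
h(r) \le \frac{A}{(1-\tau)^\beta(R_0-r)^\beta} \sum_{i=0}^{\infty} (\vartheta \tau^{-\beta})^i + \frac{B}{1-\vartheta},
\end{equation*}
and since $\tau$ depends only on $\vartheta$ and $\beta$, the constant in front of $A$ and $B$ depends only on $\vartheta$ and $\beta$. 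Replacing $R_0 - r$ by the larger quantity $R_0 - \rho_0$ (which only weakens the estimate) yields the claimed bound.

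The only subtle point is the interplay between the decay ratio $\vartheta$ and the blow-up $(t-s)^{-\beta}$: the choice $\tau^\beta > \vartheta$ is exactly what ensures that each iteration's gain from the factor $\vartheta$ beats the loss from the shrinking gap $\rho_{i+1}-\rho_i = (1-\tau)\tau^i(R_0-r)$. Everything else is routine summation of geometric series, and the boundedness hypothesis on $h$ is used only to discard $\vartheta^k h(\rho_k)$ in the limit.
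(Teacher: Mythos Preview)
The paper does not actually prove this lemma; it only cites \cite[Lemma~6.1]{Giusti}. Your argument is precisely the standard iteration from that reference, so in that sense you and the paper agree.

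One small slip at the very end: your sentence ``Replacing $R_0 - r$ by the larger quantity $R_0 - \rho_0$ (which only weakens the estimate)'' is backwards. Since $r \ge \rho_0$, indeed $R_0 - \rho_0 \ge R_0 - r$, but this quantity appears in the \emph{denominator}, so replacing it by something larger \emph{decreases} the right-hand side and would strengthen, not weaken, the bound; that implication does not hold. The fix is trivial: the lemma's conclusion is really meant for $r = \rho_0$ (this is how it is applied in Section~\ref{a-priori}), so simply run your iteration with $r = \rho_0$ from the outset, in which case $R_0 - r = R_0 - \rho_0$ and nothing needs to be replaced. Alternatively, what your argument genuinely proves is the sharper inequality $h(r) \le cA/(R_0 - r)^\beta + cB$ for every $r \in [\rho_0, R_0)$, which specialises to the stated bound at $r = \rho_0$.
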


We now present the higher differentiability result we need in the sequel. Our hypothesis implies the ones needed for this theorem to be true. The proof can be found in \cite{G-stime}.
\begin{theorem}
\label{teo-chiara}
Let $u \in \mathcal{K}_\psi(\Omega)$ be a solution to the obstacle problem
\begin{equation}
\label{dis-var-new}
\int_\Omega D_\xi f (x, Du) \cdot D(\varphi - u) \, dx \ge 0 
\end{equation}
under the assumptions \eqref{HpF1}, \eqref{HpF2}, \eqref{HpF3} for the exponents $2 \le p < q < n < r$ such that \eqref{coeff_eq} holds true. Then the following estimate
\begin{equation}
\label{stima-chiara}
\int_{B_{R/8}} |D(V_p(Du(x)))|^2 \, dx \le C \left( 1 + \| D\psi \|_{L^r(B_{R/2})} + \| D^2\psi \|_{L^r(B_{R/2})}+ \| Du \|_{L^p(B_{R/2})} \right)^\alpha
\end{equation}
holds for all balls $B_{R/8} \subset  B_{R/2} \subset B_R \Subset \Omega$, with $C = C(\nu, L, q, p, r, n, R)$ and $\alpha = \alpha(p, q, r, n)$, where we defined
\begin{equation}
\label{vupi}
V_p(\xi) \, := \, \xi \, \left( 1 + |\xi|^2 \right)^\frac{p-2}{4}
\end{equation}
\end{theorem}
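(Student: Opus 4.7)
The approach is to run the standard difference-quotient technique in a form adapted to variational inequalities with obstacles. Fix $B_R \Subset \Omega$, a cutoff $\eta \in C^\infty_c(B_{R/2})$ with $\eta \equiv 1$ on $B_{R/8}$ and $|D\eta| \le C/R$, a direction $e_s$, and a small increment $\tau \in \mathbb{R}$ (with $|\tau|$ smaller than the distance from $\mathrm{supp}\,\eta$ to $\partial\Omega$). Writing $\Delta_\tau u(x) := u(x+\tau e_s)-u(x)$, the key step is the construction of an admissible pair of test functions of the form
\[
\varphi_{\pm} := u \pm t\,\eta^{2q}\bigl(\Delta_\tau u - \Delta_\tau \psi\bigr),
\]
possibly with an additional correction term. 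Subtracting $\Delta_\tau \psi$ in the body of the perturbation is precisely the device that keeps $\varphi_{\pm} \in \mathcal{K}_\psi(\Omega)$ for all $t>0$ small, even at the contact set $\{u=\psi\}$. Plugging $\varphi_{\pm}$ into \eqref{dis-var-new} and letting $t \to 0^+$ produces an equation-like inequality whose principal part is $\int \Delta_\tau[D_\xi f(\cdot,Du)]\cdot D(\eta^{2q}\Delta_\tau u)\,dx$ and whose right-hand side collects an obstacle correction involving $\Delta_\tau \psi$.

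Next I would use the standard decomposition
\[
\Delta_\tau[D_\xi f(x,Du(x))] = \bigl[D_\xi f(x+\tau e_s,Du(x+\tau e_s)) - D_\xi f(x+\tau e_s,Du(x))\bigr] + \bigl[D_\xi f(x+\tau e_s,Du(x)) - D_\xi f(x,Du(x))\bigr].
\]
The first bracket, combined with the lower ellipticity in \eqref{HpF2} and the identity $D(\eta^{2q}\Delta_\tau u) = \eta^{2q}\Delta_\tau Du + 2q\eta^{2q-1}\Delta_\tau u\,D\eta$, produces on the left the quantity $\int \eta^{2q}(1+|Du|^2)^{(p-2)/2}|\Delta_\tau Du|^2\,dx$, which by the classical pointwise bound $|\Delta_\tau V_p(Du)|^2 \le c(1+|Du|^2)^{(p-2)/2}|\Delta_\tau Du|^2$ controls $\int \eta^{2q}|\Delta_\tau V_p(Du)|^2\,dx$. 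The second bracket is bounded via \eqref{HpF3} in terms of $|\tau|\,h(x)(1+|Du|)^{q-1}$ with $h\in L^r_\loc$, while the $\Delta_\tau \psi$-correction brings in $D^2\psi \in L^r_\loc$ through the usual difference-quotient identity.

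The main obstacle is the reabsorption of the error terms with $q$-growth (rather than $p$-growth), which arise both from the upper bound $L(1+|Du|^2)^{(q-2)/2}$ in \eqref{HpF2} and from the $L^r$ coefficient in \eqref{HpF3}. These terms contain expressions of the shape $(1+|Du|^2)^{q/2}|D\eta|^2$ and $h(x)(1+|Du|)^{q-1}|\Delta_\tau Du|$, which are not directly dominated by the principal term. To close them I would interpolate between $L^p$ and the higher Lebesgue exponent for $V_p(Du)$ supplied by the Sobolev embedding (once $V_p(Du)$ is known to sit in $W^{1,2}_\loc$, hence in $L^{2^*}_\loc$), then apply H\"older's inequality against $h\in L^r_\loc$ and $D^2\psi\in L^r_\loc$. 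The gap condition \eqref{coeff_eq}, namely $q/p<1+1/n-1/r$, is exactly what makes the exponents match, so that a small multiple of $\int \eta^{2q}|\Delta_\tau V_p(Du)|^2\,dx$ can be brought back to the left-hand side and the estimate closed by a Young inequality.

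Finally, letting $\tau\to 0$ replaces $\Delta_\tau V_p(Du)$ by $DV_p(Du)$ via the standard Sobolev/difference-quotient characterization, and an iteration based on Lemma \ref{holf} on concentric balls between $B_{R/8}$ and $B_{R/2}$ removes the residual self-reference of the same $L^2$ integral. Bookkeeping of the H\"older--Sobolev--Young exponents along the chain produces \eqref{stima-chiara} with the stated dependence of $C$ and $\alpha$.
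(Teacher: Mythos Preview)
The paper does not prove Theorem \ref{teo-chiara}; it simply quotes it as a preliminary result, with the line ``The proof can be found in \cite{G-stime}.'' So there is no in-paper argument to compare your proposal against. That said, your outline is essentially the strategy used in \cite{G-stime} and in the related literature on higher differentiability for obstacle problems under $p,q$-growth: test the variational inequality with a perturbation built from $\eta^2\,\Delta_\tau(u-\psi)$ so as to remain above the obstacle, decompose $\Delta_\tau[D_\xi f(\cdot,Du)]$ into an ellipticity part and an $x$-dependence part controlled by \eqref{HpF3}, and close the $q$-growth errors through H\"older against $h,D^2\psi\in L^r$ plus a Sobolev/interpolation step exploiting \eqref{coeff_eq}, followed by the iteration of Lemma~\ref{holf}.

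One caution: the pair $\varphi_\pm = u \pm t\,\eta^{2q}\Delta_\tau(u-\psi)$ is not symmetric with respect to admissibility. With $v:=u-\psi\ge 0$ one has $\varphi_+ -\psi = (1-t\eta^{2q})v(\cdot)+t\eta^{2q}v(\cdot+\tau e_s)\ge 0$ for $t$ small, but $\varphi_- -\psi = (1+t\eta^{2q})v(\cdot)-t\eta^{2q}v(\cdot+\tau e_s)$ can be negative wherever $v(\cdot)=0$ and $v(\cdot+\tau e_s)>0$. So $\varphi_-$ is \emph{not} an admissible competitor in general. The actual device (as in \cite{G-stime}) is to test once with $\varphi(x)=u(x)+t\,\eta^{2}(x)\,\Delta_\tau(u-\psi)(x)$ and once with the translated analogue $\varphi(x)=u(x)+t\,\eta^{2}(x-\tau e_s)\,\Delta_{-\tau}(u-\psi)(x)$, both of which are admissible by the same convex-combination check; after a change of variables in the second integral, summing produces the ``equation-like'' identity you need. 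Your phrase ``possibly with an additional correction term'' hides exactly this point, and it is the only place where the obstacle genuinely interferes with the difference-quotient machinery, so it is worth making explicit. Apart from this, your bookkeeping of the reabsorption (interpolating the $(1+|Du|^2)^{q/2}$ terms between $L^p$ and $L^{p\cdot 2^*/2}$, using $h\in L^r$, and invoking \eqref{coeff_eq}) is the correct mechanism and matches the cited proof.
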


\begin{remark}
One can easily check that, for $p \ge 2$, there exists an absolute constant $c$ such that
\begin{equation}
\label{stima_vupi}
|\xi|^p \, \le \, c \, |V_p(\xi)|^2
\end{equation}
\end{remark}
\begin{remark}
Thanks to the fact that our first main result Theorem \ref{main-result} is an a priori estimate, we can use indifferently the variational inequality \eqref{dis-var-new} and the obstacle problem \eqref{obst-def0} in the following.
\end{remark}

\noindent
Once estimate \eqref{stima-chiara} is estabilished, by applying a suitable approximation procedure (see \cite{G-stime} for the details) we get the following corollary.
\begin{corollary}
Let $u$ and $\psi$ be as above. Then the following implication holds
\begin{equation}
\label{differenziabilita}
D\psi \in W^{1, r}_\loc(\Omega) \quad \Longrightarrow \quad \left( 1 + |Du|^2 \right)^\frac{p-2}{4} Du \in W^{1, 2}_\loc(\Omega)
\end{equation}
\end{corollary}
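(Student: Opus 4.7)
The strategy is to interpret Theorem~\ref{teo-chiara} as a uniform a priori estimate and upgrade it to the full statement via a regularization argument. First, observe that under the present hypothesis the right-hand side of \eqref{stima-chiara} is finite on each pair of concentric balls $B_{R/8}\subset B_{R/2}\Subset\Omega$: the norms $\|D\psi\|_{L^r(B_{R/2})}$ and $\|D^2\psi\|_{L^r(B_{R/2})}$ are controlled by the assumption $D\psi\in W^{1,r}_\loc(\Omega)$, while $\|Du\|_{L^p(B_{R/2})}$ is finite because $f(x,Du)\in L^1_\loc(\Omega)$ together with the lower bound in \eqref{HpF1} forces $Du\in L^p_\loc(\Omega)$.

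The delicate point is that the quantity $\int|D(V_p(Du))|^2\,dx$ only makes sense once $V_p(Du)$ is known to be weakly differentiable, which is exactly what we aim to prove. To circumvent this circularity, I would regularize the problem as follows: mollify the obstacle to obtain $\psi_\varepsilon:=\psi*\rho_\varepsilon\in C^\infty$, with $D\psi_\varepsilon\to D\psi$ in $W^{1,r}_\loc(\Omega)$, and replace $f$ by $f_\varepsilon(x,\xi):=(f*\eta_\varepsilon)(x,\xi)+\varepsilon(1+|\xi|^2)^{q/2}$ (mollifying in $x$ only), which enforces uniform $q,q$-growth. Standard regularity theory then yields a smooth minimizer $u_\varepsilon$ of $\int f_\varepsilon(x,Dw)\,dx$ over the class $\{w\in u_0+W^{1,q}_0(\Omega):w\ge\psi_\varepsilon\}$, for which $D(V_p(Du_\varepsilon))$ is classically defined. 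Applying Theorem~\ref{teo-chiara} to $(u_\varepsilon,\psi_\varepsilon,f_\varepsilon)$, and checking that the structural constants in \eqref{HpF1}--\eqref{HpF3} can be chosen uniformly in $\varepsilon$, one obtains a bound on $\|V_p(Du_\varepsilon)\|_{W^{1,2}(B_{R/8})}$ independent of $\varepsilon$.

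The concluding step is the passage to the limit. Coercivity and the energy comparison $\int f_\varepsilon(x,Du_\varepsilon)\,dx \le \int f_\varepsilon(x,Du)\,dx$ allow one to extract $u_\varepsilon\rightharpoonup u^\ast$ weakly in $W^{1,p}_\loc(\Omega)$, and the uniform $W^{1,2}$ bound on $V_p(Du_\varepsilon)$ combined with Rellich--Kondrachov yields, up to a subsequence, $V_p(Du_\varepsilon)\to V_p(Du^\ast)$ a.e., hence also $Du_\varepsilon\to Du^\ast$ a.e.\ (since $V_p$ is a homeomorphism); weak lower semicontinuity of the $W^{1,2}$-norm then transfers the uniform bound to $V_p(Du^\ast)$. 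The principal obstacle is the identification $u^\ast=u$: one must guarantee that the $W^{1,q}$-admissible approximants converge to the given $W^{1,p}$-solution of the original obstacle problem rather than to a different element of the relaxed class, an issue which is genuine in the present setting because of the possible occurrence of the Lavrentiev phenomenon. This is exactly the role played by the crucial Lemma~\ref{conv-funz} alluded to in the introduction, and once it secures the identification the inclusion $V_p(Du)\in W^{1,2}_\loc(\Omega)$ follows at once.
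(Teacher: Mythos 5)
The regularization strategy you outline runs into the very difficulty that you yourself flag and then do not resolve: identifying the limit $u^\ast$ of the regularized minimizers $u_\varepsilon$ with the given solution $u$. Lemma~\ref{conv-funz} does not fill this gap. That lemma guarantees, for an arbitrary $u\in\mathcal{K}_\psi(\Omega)$, the \emph{existence} of some sequence $u_k\in\mathcal{K}^*_\psi(\Omega)$ with $u_k\rightharpoonup u$ and $\mathcal F(u_k)\to\overline{\mathcal F}(u)$. It says nothing about the specific sequence $u_\varepsilon$ you construct by minimizing the regularized functionals $\int f_\varepsilon(x,Dw)\,dx$ over the $W^{1,q}$-admissible class. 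When the Lavrentiev gap $\mathcal L(u)>0$ — precisely the situation this paper is designed to handle — the $u_\varepsilon$ track the relaxed functional $\overline{\mathcal F}$, not $\mathcal F$, and therefore converge to a minimizer of the relaxed obstacle problem, which may genuinely differ from $u$. There is no mechanism in your argument that forces $u^\ast=u$, and without that identification the uniform $W^{1,2}$ bound on $V_p(Du_\varepsilon)$ transfers only to $V_p(Du^\ast)$, not to $V_p(Du)$.

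The paper itself does not carry out an approximation of the functional here; it cites \cite{G-stime} for a procedure that works directly with the fixed solution $u$ of the variational inequality \eqref{dis-var-new} (the standard route is a translation/difference-quotient argument, which produces the $L^2$ bound on the difference quotients of $V_p(Du)$ and then concludes $V_p(Du)\in W^{1,2}_\loc$ without ever constructing a competing minimizer). This sidesteps the identification issue entirely because one never leaves the given $u$. If you want to pursue an approximation-of-the-functional route instead, you would need an extra structural hypothesis (for instance $\mathcal L(u)=0$, or uniqueness of the minimizer) to conclude $u^\ast=u$; absent such a hypothesis the argument does not prove the corollary as stated.

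A secondary, smaller point: the assertion ``$V_p(Du_\varepsilon)\to V_p(Du^\ast)$ a.e., hence $Du_\varepsilon\to Du^\ast$ a.e.\ since $V_p$ is a homeomorphism'' is stated in a slightly circular order. What Rellich gives is a.e.\ convergence of $V_p(Du_\varepsilon)$ to its $W^{1,2}$ weak limit $w$; the homeomorphism property then gives $Du_\varepsilon\to V_p^{-1}(w)$ a.e., and only afterwards does one match $V_p^{-1}(w)$ with $Du^\ast$ via the already-known weak $W^{1,p}$ convergence. This is easily repaired and is not the core of the problem; the core is the identification step above.
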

\noindent


\section{A priori estimate}
\label{a-priori}

\noindent

\subsection{The linearization procedure}
\label{lineariz}

The linearization procedure is a process which goes back to \cite{F85} and later was refined in \cite{D87}, see also \cite{F90}, \cite{F94}, \cite{FG}. We will follow the lines of \cite{BC20}.
\\
\\
We consider a smooth function $h_\varepsilon : (0, \infty) \to [0, 1]$ such that $h'_\varepsilon(s) \le 0 $ for all $s\in (0, \infty)$ and 
\begin{equation*}
h_\varepsilon(s) = 
     \begin{cases}
       1 &\quad\text{for} \quad s \le \varepsilon \\
       0 &\quad\text{for} \quad s \ge 2 \, \varepsilon \\
     \end{cases}
\end{equation*}
Consider the function 
\begin{equation*}
\varphi = u + t\cdot \eta \cdot  h_\varepsilon (u - \psi)  
\end{equation*}
with $\eta \in C_0^1 (\Omega)$, $\eta \ge 0$ and $0 < t <<1$ as test function, in the following variational inequality 
\begin{equation}
\label{dis-var-new}
\int_\Omega D_\xi f (x, Du) \cdot D(\varphi - u) \, dx \ge 0 
\end{equation}
that holds true for all $\varphi \in W^{1,q}_\loc(\Omega)$, $\varphi \ge \psi$. We have
\begin{equation*}
\int_\Omega D_\xi f(x, Du) \cdot D(\eta \, h_\varepsilon (u-\psi)) \, dx \ge 0 \qquad \forall \, \eta \in C_0^1(\Omega)
\end{equation*}
Since 
\begin{equation*}
\eta \mapsto L(\eta)=\int_\Omega D_\xi f(x, Du) \cdot D(\eta \, h_\varepsilon (u-\psi)) \, dx
\end{equation*}
is a bounded positive linear functional, by Riesz Representation Theorem there exists a non-negative measure $\lambda_\varepsilon$ such that 
\begin{equation*}
\int_\Omega D_\xi f(x, Du) \cdot D(\eta \, h_\varepsilon (u-\psi)) \, dx = \int_\Omega \eta \, d\lambda_\varepsilon  \qquad \forall \, \eta \in C_0^1(\Omega)
\end{equation*}
It is not difficult to prove that the measure $\lambda_\varepsilon$ is independent to $\varepsilon$. Therefore we can write
\begin{equation*}
\int_\Omega D_\xi f(x, Du) \cdot D(\eta \, h_\varepsilon (u-\psi)) \, dx = \int_\Omega \eta \, d\lambda \qquad \forall \, \eta \in C_0^1(\Omega)
\end{equation*}
By Theorem \ref{teo-chiara}, we have that
\begin{equation}
\label{def_V}
V_p(Du) := (1 + |Du|^2)^\frac{p - 2}{4} \, Du \in W^{1,2}_\loc (\Omega)
\end{equation}
Now, in order to identify the measure $\lambda$, we may pass to the limit as $\varepsilon \downarrow 0 $
\begin{equation}
\label{eq-g}
\int_\Omega -\text{div} (D_\xi f(x, Du)) \,\chi_{\left[u = \psi \right]} \, \eta \, dx = \int_\Omega \eta \, d\lambda \qquad \forall  \, \eta \in C_0^1(\Omega)
\end{equation} 
By introducing
\begin{equation}
\label{def-g}
g := -\text{div} (D_\xi f(x, Du)) \chi_{\left[u = \psi \right]}
\end{equation}
and combining our results we obtain
\begin{equation}
\label{eq:g}
\int_\Omega D_\xi f(x, Du) \cdot D\eta \, dx = \int_\Omega g \, \eta \, dx \qquad \forall  \, \eta \in C_0^1(\Omega)
\end{equation}
We are left to obtain an $L^r$ estimate for $g$: since $Du=D\psi$ a.e. on the contact set, by \eqref{HpF2} and \eqref{HpF3} and the assumption $D\psi \in W^{1,r}_\loc (\Omega; \, \R^n)$, we have 
\begin{eqnarray*}
|g|
&=& \left|\text{div} (D_\xi f(x, Du)) \, \chi_{\left[u = \psi \right]} \right| \\[3.5mm]
&=& \left|\text{div} (D_\xi f(x, D\psi)) \right| \\[2mm]
&\le& \sum_{k=1}^n |f_{\xi_k x_k}(x, D\psi)| + \sum_{k,i=1}^n |f_{\xi_k \xi_i}(x, D\psi) \, \psi_{x_k x_i}| \\ 
&\le& h(x) \, (1+|D\psi|^2)^{\frac{q-1}{2}}+ L \, (1+|D\psi|^2)^{\frac{q-2}{2}} \, |D^2 \psi|
\end{eqnarray*}
that is $g \in L^r_\loc (\Omega)$. 

\subsection{A priori estimate}
\label{estimate}

Our starting point is now \eqref{eq:g}. As long as we are proving an a priori estimate, we make use of the fact that $u \in W^{1,\infty}_{\rm loc}(\Omega)$, which is needed in order to let \eqref{eq:g} to be satisfied. By this further requirement and Theorem \ref{teo-chiara}, the ``second variation'' system holds
\begin{equation}
\label{second_variation}
\int_\Omega \left( \sum_{i,j=1}^n  f_{\xi_i \xi_j}(x, Du) \, u_{x_j x_s} \, D_{x_i} \, \varphi + \sum_{i=1}^n f_{\xi_i x_s}(x, Du) \, D_{x_i} \, \varphi \right) dx = \int_\Omega g \, D_{x_s} \, \varphi \, dx
\end{equation}
\\
for all $s = 1, \dots, n$ and for all $\varphi \in W^{1,2}_0(\Omega)$. We fix $0 < \rho < R$ with $B_R$ compactly contained in $\Omega$ and we choose $\eta \in \mathcal{C}^1_0(\Omega)$ such that $0 \le \eta \le 1$,  $\eta \equiv 1$ on $B_\rho$, $\eta \equiv 0$ outside $B_R$ and $|D \eta| \le \frac{C}{R - \rho}$. We test \eqref{second_variation} with
\begin{equation*}
\varphi = \eta^2 \, (1 + |Du|^2)^\gamma \, u_{x_s}
\end{equation*}
for some $\gamma \ge 0$ so that 
\begin{eqnarray*}
D_{x_i}\varphi
&=& 2 \, \eta \, \eta_{x_i} \, (1 + |Du|^2)^\gamma \, u_{x_s} \\[2.0mm]
&&+ 2 \, \eta^2 \, \gamma \, (1 + |Du|^2)^{\gamma-1} \, |Du| \, D_{x_i}(|Du|) \, u_{x_s} \\[2.0mm]
&&+ \eta^2 \, (1 + |Du|^2)^\gamma \, u_{x_s x_i}
\end{eqnarray*}
Inserting in \eqref{second_variation} we get: 
\begin{eqnarray*} 
0
&=& \int_\Omega \, \sum_{i,j=1}^n f_{\xi_i \xi_j}(x, Du) \, u_{x_j x_s} \, 2 \, \eta \, \eta_{x_i} \, (1 + |Du|^2)^\gamma \, u_{x_s} \, dx \\ 
&&+ \int_\Omega \, \sum_{i,j=1}^n f_{\xi_i \xi_j}(x, Du) \, u_{x_j x_s} \, \eta^2 \, (1 + |Du|^2)^\gamma \, u_{x_s x_i} \, dx \\
&&+ \int_\Omega \, \sum_{i,j=1}^n f_{\xi_i \xi_j}(x, Du) \, u_{x_j x_s} \, 2 \, \eta^2 \, \gamma \, (1 + |Du|^2)^{\gamma-1} \, |Du| \, D_{x_i}(|Du|) \, u_{x_s} \, dx \\
&&+ \int_\Omega \, \sum_{i=1}^n f_{\xi_i x_s}(x, Du) \, 2 \, \eta \, \eta_{x_i} \, (1 + |Du|^2)^\gamma \, u_{x_s} \, dx \\
&&+ \int_\Omega \, \sum_{i=1}^n f_{\xi_i x_s}(x, Du) \, \eta^2 \, (1 + |Du|^2)^\gamma \, u_{x_s x_i} \, dx \\
&&+ \int_\Omega \, \sum_{i=1}^n f_{\xi_i x_s}(x, Du) \, 2 \, \eta^2 \, \gamma \, (1 + |Du|^2)^{\gamma-1} \, |Du| \, D_{x_i}(|Du|) \, u_{x_s} \, dx \\
&&- \int_\Omega \, g \, 2 \, \eta \, \eta_{x_s} \, (1 + |Du|^2)^\gamma \, u_{x_s} \, dx \\
&&- \int_\Omega \, g \, 2 \, \eta^2 \, \gamma \, (1 + |Du|^2)^{\gamma - 1} \, |Du| \, D_{x_s}(|Du|) \, u_{x_s} \, dx \\
&&- \int_\Omega \, g \, \eta^2 \, (1 + |Du|^2)^\gamma \, u_{x_s x_s} \, dx \\
&:=& I_{1,s} + I_{2,s} + I_{3,s} + I_{4,s} + I_{5,s} + I_{6,s} + I_{7,s} + I_{8,s} + I_{9,s}
\end{eqnarray*}
We sum in the previous equation all terms with respect to $s$ from $1$ to $n$, and we denote by $I_1-I_9$ the corresponding integrals. We can estimate them following the lines of \cite{BR1}. 
\\
Summing up the nine terms and using \eqref{HpF2}, we obtain
\begin{eqnarray}
\label{final_eq}
&&\int_\Omega \eta^2 \, (1 + |Du|^2)^{\frac{p-2}{2} + \gamma} \, |D^2 u|^2 \, dx \nonumber \\
&\le& C \, \Theta \, (1 + \gamma^2) \left[ \int_\Omega (\eta^{2m} + |D \eta|^{2m}) \, (1 + |Du|^2)^{\left( q-\frac{p}{2}+\gamma \right) m} \, dx \right]^\frac{1}{m}
\end{eqnarray}
where the constant $C$ depends on $\nu, L, n, p, q$ but is independent of $\gamma$ and where we set
\\
\begin{equation*}
\Theta = 1 + \|g\|^2_{L^r(\Omega)} + \|h\|^2_{L^r(\Omega)}
\end{equation*}
and
\begin{equation}
\label{emme}
m = \frac{r}{r-2}
\end{equation}
By Sobolev Embedding Theorem, recalling that $p \ge 2$, we have
\begin{eqnarray*}
&& \left[ \int_\Omega \eta^{2^*} \, (1 + |Du|^2)^{\left( \gamma + \frac{p}{2} \right) \frac{2^*}{2}} \, dx \right]^\frac{2}{2^*} \\
&=& \left[ \int_\Omega \eta^{2^*} \, (1 + |Du|^2)^{\left( \frac{\gamma}{2} + \frac{p}{4} \right) 2^*} \, dx \right]^\frac{2}{2^*} \\
&\le& C \int_\Omega \left| D \left[ \eta \, (1 + |Du|^2)^{\frac{\gamma}{2}+\frac{p}{4}} \right] \right|^2 dx \\
&\le& C \int_\Omega |D\eta|^2 \, (1 + |Du|^2)^{\gamma+\frac{p}{2}} \, dx \\
&&+ \, C \, (1+\gamma^2) \int_\Omega \eta^2 \left[ (1 + |Du|^2)^{\frac{\gamma}{2}+\frac{p}{4}-1} \, |Du| \, |D^2u| \right]^2 \, dx \\
&=& C \int_\Omega |D\eta|^2 \, (1 + |Du|^2)^{\gamma+\frac{p}{2}} \, dx \\
&&+ \, C \, (1+\gamma^2) \int_\Omega \eta^2 \, (1 + |Du|^2)^{\gamma+\frac{p}{2}-2} \, |Du|^2 \, |D^2u|^2 \, dx \\
&=& C \int_\Omega |D\eta|^2 \, (1 + |Du|^2)^{\gamma+\frac{p}{2}} \, dx \\
&&+ \, C \, (1+\gamma^2) \int_\Omega \eta^2 \, (1 + |Du|^2)^{\gamma+\frac{p-2}{2}-1} \, |Du|^2 \, |D^2u|^2 \, dx \\
&\le& C \int_\Omega |D\eta|^2 \, (1 + |Du|^2)^{\gamma+\frac{p}{2}} \, dx \\
&&+ \, C \, (1+\gamma^2) \int_\Omega \eta^2 \, (1 + |Du|^2)^{\gamma+\frac{p-2}{2}} \, |D^2u|^2 \, dx
\end{eqnarray*}
where we set
\begin{equation*}
2^* =
	\begin{cases}
	\displaystyle \frac{2 \, n}{n-2} \, & \text{if $n \ge 3$} \\ \\
	\text{any finite exponent} \, & \text{if $n=2$}
	\end{cases}
\end{equation*}
and we can observe that $2^*>2$. In the case $n = 2$ we assume that $2^* > 2 \, m$.
Thanks to the left hand side of \eqref{coeff_eq}, we know that
\begin{equation*}
1 \le \frac{q}{p}
\end{equation*}
\begin{equation*}
p \le q
\end{equation*}
\begin{equation*}
\frac{p}{2} \le q - \frac{p}{2}
\end{equation*}
which allow us to say that
\begin{equation*}
\gamma + \frac{p}{2} \le q - \frac{p}{2} + \gamma
\end{equation*}
so we can write
\begin{eqnarray*}
\left[ \int_\Omega \eta^{2^*} \, (1 + |Du|^2)^{\left( \gamma + \frac{p}{2} \right) \frac{2^*}{2}} \, dx \right]^\frac{2}{2^*}
&\le& C \int_\Omega |D\eta|^2 \, (1 + |Du|^2)^{q-\frac{p}{2}+\gamma} \, dx \\
&&+ \, C \, (1+\gamma^2) \int_\Omega \eta^2 \, (1 + |Du|^2)^{\gamma+\frac{p-2}{2}} \, |D^2u|^2 \, dx
\end{eqnarray*}
Thanks to \eqref{final_eq}, we finally get
\begin{eqnarray*}
&&\left[ \int_\Omega \eta^{2^*} \, (1 + |Du|^2)^{\left( \gamma + \frac{p}{2} \right ) \frac{2^*}{2}} \, dx \right]^\frac{2}{2^*} \\
&\le& C \, \Theta \, (1 + \gamma^2) \left[ \int_\Omega (\eta^{2m} + |D \eta|^{2m}) \, (1 + |Du|^2)^{\left( q - \frac{p}{2}+\gamma \right) m} \, dx \right]^\frac{1}{m}
\end{eqnarray*}
from which we deduce
\begin{equation*}
\left[ \int_{B_\rho} (1 + |Du|^2)^{\left( \gamma + \frac{p}{2} \right) \frac{2^*}{2}} \, dx \right]^\frac{2}{2^*} \le \, C \, \frac{\Theta \, (1 + \gamma^2)}{(R - \rho)^2} \left[ \int_{B_R} (1 + |Du|^2)^{\left( q - \frac{p}{2}+\gamma \right) m} \, dx \right]^\frac{1}{m}
\end{equation*}
for any $0<\rho<R$. \\
At this point we introduce the quantity $\sigma$ defined as
\begin{equation}
\label{sigma}
\sigma = q - \frac{p}{2} - \frac{p}{2 \, m}
\end{equation}
where we observe that $\sigma >0$ due to left hand side inequality of assumption \eqref{coeff_eq} and the fact that $m>1$, in fact
\begin{equation*}
\sigma = q - \frac{p}{2} - \frac{p}{2 \, m} \ge p - \frac{p}{2} - \frac{p}{2 \, m} = \frac{p}{2} - \frac{p}{2 \, m} > 0
\end{equation*}
That allow us to say that
\begin{equation*}
q - \frac{p}{2} = \sigma + \frac{p}{2m}
\end{equation*}
Therefore
\begin{eqnarray*}
\left[ \int_{B_\rho} (1 + |Du|^2)^{\left( \gamma + \frac{p}{2} \right) \frac{2^*}{2}} \, dx \right ]^\frac{2}{2^*}
&\le& C \, \frac{\Theta \, (1 + \gamma^2)}{(R - \rho)^2} \left[ \int_{B_R} (1 + |Du|^2)^{\left( q - \frac{p}{2}+\gamma \right) m} \, dx \right]^\frac{1}{m} \\
&=& C \, \frac{\Theta \, (1 + \gamma^2)}{(R - \rho)^2} \left[ \int_{B_R} (1 + |Du|^2)^{\left( \sigma + \frac{p}{2m}+\gamma \right) m} \, dx \right]^\frac{1}{m}
\end{eqnarray*}
which allow us to say that
\begin{eqnarray}
\label{finale3}
&&\left[ \int_{B_\rho} (1 + |Du|^2)^{\left( \gamma + \frac{p}{2} \right) \frac{2^*}{2}} \, dx \right]^\frac{2}{2^*} \nonumber \\
&\le& C \, \frac{\Theta \, (1 + \gamma^2)}{(R - \rho)^2} \, \|1 + |Du|^2\|^\sigma_{L^\infty(B_R)} \left[ \int_{B_R} (1 + |Du|^2)^{\gamma \, m + \frac{p}{2}} \, dx \right]^\frac{1}{m}
\end{eqnarray}
\\
We now inductively define the exponents
\begin{eqnarray}
&& \gamma_1 := 0 \nonumber \\[2.0mm]
&& \gamma_{k+1} := \frac{1}{m} \left[ \left( \gamma_k + \frac{p}{2} \right) \frac{2^*}{2} - \frac{p}{2} \right] \label{tre_exps} \\[2.0mm]
&& \alpha_k := m \, \gamma_k + \frac{p}{2} \nonumber
\end{eqnarray}
for every integer $k \ge 1$. It follows that
\begin{equation}
\label{alpha}
\alpha_{k+1} = \left( \gamma_k + \frac{p}{2}\right) \frac{2^*}{2} \, = \, \chi \, \alpha_k + \tau
\end{equation}
where we have set
\begin{equation}
\label{chi_tau}
\chi := \frac{2^*}{2 \, m} \qquad \text{and} \qquad \tau := \frac{2^* \, \alpha_1}{r} = \frac{2^* \, p}{2 \, r}
\end{equation}
By induction we can prove that
\begin{equation}
\label{alpha_sum}
\alpha_{k+1} = \alpha_1 \, \chi^k + \tau \, \sum_{i=0}^{k-1} \chi^i = \frac{p}{2} \, \chi^k + \tau \, \sum_{i=0}^{k-1}\chi^i
\end{equation}
and
\begin{equation}
\label{gamma}
\gamma_{k+1} = \frac{\alpha_1}{m} \, (\chi^k - 1) + \frac{\tau}{m} \, \sum_{i=0}^{k-1} \chi^i = \frac{p}{2 \, m} \, (\chi^k - 1) + \frac{\tau}{m} \, \sum_{i=0}^{k-1} \chi^i 
\end{equation}
\\
Now we consider $0 < \rho_0 < R_0$ and set
\begin{equation*}
R_k = \rho_0 + \frac{R_0 - \rho_0}{2^k} \qquad \forall k \ge 1
\end{equation*}
so that $R_{k+1} \le \, R_k$ for all $k \ge 1$ and
\begin{equation*}
R_k - R_{k+1} = \frac{R_0 - \rho_0}{2^{k+1}}.
\end{equation*}
We rewrite \eqref{finale3} with $\rho = R_{k+1}$ and $R = R_k$. We obtain
\begin{eqnarray*}
&&\left[ \int_{B_{R_{k+1}}} (1 + |Du|^2)^{\left( \gamma_k + \frac{p}{2} \right) \frac{2^*}{2}} \, dx \right] ^\frac{2}{2^*} \\
&\le& C \, \frac{\Theta \, (1 + \gamma_k^2)}{(R_k - R_{k+1})^2} \, \|1 + |Du|^2\|^\sigma_{L^\infty(B_{R_k})} \left[ \int_{B_{R_k}} (1 + |Du|^2)^{m \, \gamma_k +\frac{p}{2}}  \, dx \right]^\frac{1}{m}
\end{eqnarray*}
from which we deduce
\begin{eqnarray*}
&& \left[ \int_{B_{R_{k+1}}} (1 + |Du|^2)^{\left( \gamma_k + \frac{p}{2} \right) \frac{2^*}{2}} \, dx \right] ^\frac{2}{2^*} \\
&\le& C \, \frac{4^{k+1} \, \Theta \, (1 + \gamma_k^2)}{(R_0 - \rho_0)^2} \, \|1 + |Du|^2\|^\sigma_{L^\infty(B_{R_k})} \left[ \int_{B_{R_k}} (1 + |Du|^2)^{m \, \gamma_k + \frac{p}{2}} \, dx \right]^\frac{1}{m}
\end{eqnarray*}
and we can write
\begin{eqnarray}
&& \int_{B_{R_{k+1}}} (1 + |Du|^2)^{\left( \gamma_k + \frac{p}{2} \right) \frac{2^*}{2}} \, dx \label{finale4} \\
&\le& \left[ C \, \frac{4^{k+1} \, \Theta \, (1 + \gamma_k^2)}{(R_0 - \rho_0)^2} \right]^\frac{2^*}{2} \, \|1 + |Du|^2\|^\frac{2^* \, \sigma}{2}_{L^\infty(B_{R_k})} \left[ \int_{B_{R_k}} (1 + |Du|^2)^{m \, \gamma_k + \frac{p}{2}} \, dx \right]^\chi \nonumber
\end{eqnarray}
\\
For each $k \in \N$, we define:
\begin{equation}
\label{Akappa}
A_k := \left[ \int_{B_{R_k}} (1+|Du|^2)^{\alpha_k} \, dx \right]^\frac{1}{\alpha_k}
\end{equation}
where we can use the definitions \eqref{tre_exps} and \eqref{alpha} for $\alpha_k$ and $\alpha_{k+1}$. So we can rewrite \eqref{finale4} as \\
\begin{equation*}
A_{k+1} \, \le \, \left[ C \, \frac{4^{k+1} \, \Theta \, (1+\gamma_k^2)}{(R_0 - \rho_0)^2} \right]^\frac{2^*}{2 \, \alpha_{k+1}} \, \left[ \|1 + |Du|^2\|_{L^\infty(B_{R_k})}^\frac{2^* \, \sigma }{2} \right]^\frac{1}{\alpha_{k+1}} \, A_k^\frac{\alpha_k \, \chi}{\alpha_{k+1}}
\end{equation*}
Iterating this inequality we obtain
\begin{equation}
\label{estimate}
A_{k+1} \, \le \, \prod_{i=1}^k \left[ C \, \frac{4^{i+1} \, \Theta \, (1+\gamma_i^2)}{(R_0 - \rho_0)^2} \right]^\frac{2^* \, \chi^{k- i}}{2 \, \alpha_{k+1}} \, \left[ \|1 + |Du|^2\|_{L^\infty(B_R)}^\frac{2^* \, \sigma }{2} \right]^{\frac{1}{\alpha_{k+1}} \, \sum_{i=0}^{k-1} \chi^i} \, A_1^\frac{\chi^k \, \alpha_1}{\alpha_{k+1}}
\end{equation}
\\
Thanks to \eqref{alpha_sum}, it's easy to see that
\begin{equation}
\label{exp_norma}
\lim_{k \rightarrow +\infty} \, \frac{1}{\alpha_{k+1}} \, \sum_{i = 0}^{k-1} \, \chi^i \, = \, \frac{1}{\alpha_1 \, (\chi - 1) + \tau} 
\end{equation}
and that
\begin{equation}
\label{delta}
\lim_{k \rightarrow +\infty} \, \frac{\chi^k \, \alpha_1}{\alpha_{k+1}} \, = \, \frac{\alpha_1 \, (\chi - 1)}{\alpha_1 \, (\chi - 1) + \tau} = \delta 
\end{equation}
\\
We define:
\begin{eqnarray*}
M_k
&:=& \prod_{i=1}^k \, \left[ C \, 4^{i+1} \, (1 + \gamma_i^2) \right]^\frac{2^* \, \chi^{k-i}}{2 \, \alpha_{k+1}} \\
&=& \exp\left\{ \frac{2^*}{2 \, \alpha_{k+1}} \, \sum_{i=1}^k \, \chi^{k - i} \, \log\left[ C \, 4^{i+1} \, (1+\gamma_i^2)\right] \right\}
\end{eqnarray*}
\\
and it isn't difficult to see, thanks to \eqref{alpha_sum}, that
\begin{equation}
\label{emme_kappa}
\lim_{k \rightarrow +\infty} M_k \, \le \, M \, < \, +\infty
\end{equation}
\\
Last but not least, as proved in \cite{BR1}, we can say that
\begin{equation}
\label{Ics}
X \, := \, \lim_{k \rightarrow +\infty} \sum_{i=1}^k \, \frac{2^* \, \chi^{k- i}}{2 \, \alpha_{k+1}} \, < \, \infty
\end{equation}
Thanks to \eqref{exp_norma}, \eqref{delta}, \eqref{emme_kappa} and \eqref{Ics}, letting $k \rightarrow +\infty$, noticing that $R_0 > R_1$, we can rewrite \eqref{estimate} as follows
\begin{equation*}
\|1 + |Du|^2\|_{L^\infty(B_{\rho_0})} \, \le \, \frac{M \, \Theta^X}{(R_0 - \rho_0)^{2 \, X}} \, \|1+|Du|^2\|^\frac{2^* \, \sigma}{2 \, \alpha_1 \, (\chi - 1) + 2 \, \tau} \, \left[ \int_{B_{R_0}} (1+|Du|^2)^\frac{p}{2} \, dx \right]^\frac{2 \, \delta}{p}
\end{equation*}
\\
Now we have that
\begin{eqnarray}
E
&:=& \frac{2^* \, \sigma}{2 \, \alpha_1 \, (\chi - 1) + 2 \, \tau} \nonumber \\
&=& \frac{\frac{2^* \, \sigma}{2}}{\frac{p}{2} \, (\chi - 1) + \frac{2^* \, \alpha_1}{r}} \nonumber \\
&=& \frac{\frac{2^*}{2} \, \left[ q - p \, \left( \frac{1}{2} + \frac{1}{2 \, m} \right) \right]}{\frac{p}{2} \, \left( \frac{2^*}{2 \, m} - 1 \right) + \frac{2^* \, p}{2 \, r}} \nonumber \\
&=& \frac{2^* \, \left[ q - p \, \left( \frac{1}{2} + \frac{1}{2 \, m} \right) \right]}{p \, \left[ \frac{2^*}{2 \, m} - 1 + \frac{2^*}{r} \right]} \label{exp_E}
\end{eqnarray}
\\
and we have to prove that $E<1$. Now, if $n \ge 3$ then we are done if and only if we have that
\begin{equation*}
2^* \, \left[ q - p \, \left( \frac{1}{2} + \frac{1}{2 \, m} \right) \right] \, < \, p \, \left[ \frac{2^*}{2 \, m} - 1 + \frac{2^*}{r} \right]
\end{equation*}
\begin{equation*}
2^* \, \left[ q - p \, \left( \frac{1}{2} + \frac{1}{2 \, m} \right) \right] \, < \, 2^* \, p \, \left[ \frac{1}{2 \, m} - \frac{1}{2^*} + \frac{1}{r} \right]
\end{equation*}
\begin{equation*}
q \, < \, p \, \left[ \frac{1}{2} + \frac{1}{2 \, m} + \frac{1}{2 \, m} - \frac{1}{2^*} + \frac{1}{r} \right]
\end{equation*}
but using the equality 
\\
\begin{equation}
\label{identita_Eleuteri}
\frac{1}{2 \, m} - \frac{1}{2^*} \, = \, \frac{1}{n} - \frac{1}{r}
\end{equation}
we know that
\begin{eqnarray*}
\frac{1}{2} + \frac{1}{2 \, m} + \frac{1}{2 \, m} - \frac{1}{2^*} + \frac{1}{r}
&=& \frac{1}{2} + \frac{2}{2^*} + \frac{2}{n} - \frac{2}{r} - \frac{1}{2^*} + \frac{1}{r} \\
&=& \left[ \frac{1}{2} + \frac{1}{2^*} + \frac{2}{n} \right] - \frac{1}{r} \\
&=& \left[ \frac{1}{2} + \frac{n - 2}{2 \, n} + \frac{2}{n} \right] - \frac{1}{r} \\
&=& \frac{n + n - 2 + 4}{2 \, n} - \frac{1}{r} \\
&=& 1 + \frac{1}{n} - \frac{1}{r}
\end{eqnarray*}
\\
and, thanks to \eqref{coeff_eq}, the thesis is proved. On the other hand, if $n = 2$, then passing to the limit $2^* \rightarrow \infty$ in \eqref{exp_E} we deduce, thanks to \eqref{emme} and \eqref{sigma}, that
\begin{equation*}
\frac{\sigma}{p \, \left( \frac{1}{2 \, m} + \frac{1}{r} \right)} \, < \, 1
\end{equation*}
if and only if
\begin{equation*}
q - \frac{p}{2} - \frac{p}{2 \, m} \, < \, p \, \left( \frac{1}{2 \, m} + \frac{1}{r} \right)
\end{equation*}
\begin{equation*}
q \, < \, p \, \left[ \frac{1}{2} + \frac{1}{m} + \frac{1}{r} \right]
\end{equation*}
\begin{equation*}
q \, < \, p \, \left[ \frac{1}{2} + \frac{r - 2}{r} + \frac{1}{r} \right]
\end{equation*}
\begin{equation*}
q \, < \, p \, \left [\frac{3}{2} - \frac{1}{r} \right]
\end{equation*}
which is nothing but \eqref{coeff_eq} with the choice $n = 2$. Thus we can use the Young's inequality with exponents
\begin{equation*}
\frac{2 \, \alpha_1 \, (\chi - 1) + 2 \, \tau}{2^* \, \sigma} \qquad \textnormal{and} \qquad \frac{2 \, \alpha_1 \, (\chi - 1) + 2 \, \tau}{[2 \, \alpha_1 \, (\chi - 1) + 2 \, \tau] - 2^* \, \sigma}
\end{equation*}
to get:
\begin{equation*}
\|1 + |Du|^2\|_{L^\infty(B_{\rho_0})} \, \le \, \frac{1}{2} \, \|1 + |Du|^2\|_{L^\infty(B_{R_0})} + \left[ \frac{M \, \Theta^X}{(R_0 - \rho_0)^{2 \, X}} \right]^\vartheta \, \left[ \int_{B_{R_0}} (1+|Du|^2)^\frac{p}{2} \, dx  \right]^\frac{2 \, \delta \, \vartheta}{p}
\end{equation*}
for an exponent $\vartheta = \vartheta\,(n, r, p, q) > 0$. \\ \\
Since the previous estimate holds true for $\rho < \rho_0 < R_0 < R$, once more by Lemma \ref{holf} we finally get
\begin{equation*}
\|1 + |Du|^2\|_{L^\infty(B_\rho)} \, \le \, \left[ \frac{M \, \Theta^X}{(R - \rho)^{2 \, X}} \right]^\vartheta \, \left[ \int_{B_R} (1+|Du|^2)^\frac{p}{2} \, dx \right]^\frac{2 \, \delta \, \vartheta}{p}
\end{equation*}
so we can write, thanks to \eqref{HpF1}
\begin{eqnarray*}
\|Du\|_{L^\infty(B_\rho)}
&\le& \left[ \frac{M \, \Theta^X}{(R - \rho)^{2 \, X}} \right]^\vartheta \, \left[ \int_{B_R} (1+|Du|^2)^\frac{p}{2} \, dx \right]^\frac{2 \, \delta \, \vartheta}{p} \\
&\le& \left[ \frac{M \, \Theta^X}{(R - \rho)^{2 \, X}} \right]^\vartheta \, \left[ \int_{B_R} \frac{1}{\nu} \, f(x, Du) \, dx \right]^\frac{2 \, \delta \, \vartheta}{p} \\
&\le& C \, \left\{ \int_{B_R} \left[1 + f(x, Du) \right] \, dx \right\}^\beta
\end{eqnarray*}
\begin{rmk*}
We can rewrite the equation \eqref{a-priori} in a different way as follows
\begin{equation}
\label{a-priori-approx}
\|Du\|_{L^\infty(B_\rho)} \, \le \, C \, \left[ \|1 + h\|_{L^r(B_R)} \right]^{\alpha \, \gamma} \, \left\{ \int_{B_R} \left[1 + f(x, Du) \right] \, dx \right\}^\frac{\gamma}{p}
\end{equation}
\end{rmk*}


\section{Approximation in case of Occurrence of Lavrentiev Phenomenon and proof of Theorem \ref{lip-solution}}

\label{approx-lavr}

\subsection{Approximation procedure}

We start by proving a lemma which will be necessary in the following. The proof of this lemma follows similarly as in \cite{BM76}, see also \cite{EMM3}, where the same result has been achieved for functionals. Here the novelty is the application to obstacle problems.
\begin{lemma}
\label{conv-funz}
For each $u \in \mathcal{K}_\psi(\Omega)$, there exists a sequence $u_k \in \mathcal{K}^*_\psi(\Omega)$ such that $u_k \rightharpoonup u$ weakly in $W^{1,p}(\Omega)$ and
\begin{equation*}
\overline{\mathcal{F}}(u) \, = \, \lim_{k \rightarrow +\infty} \mathcal{F}(u_k)
\end{equation*}
\end{lemma}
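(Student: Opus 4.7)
\emph{Proof plan.} The argument is the textbook recovery-sequence construction for a relaxed functional, adapted here to the obstacle setting. The crux is a diagonal extraction: for each $k$ one first selects a sequence that is $\frac{1}{k}$-optimal for the infimum \eqref{relax}, and then picks a single term that is simultaneously weakly close to $u$. The coercivity of $f$ is what makes the extraction rigorous, by confining all candidates to a norm-bounded set on which weak $W^{1,p}$-convergence is metrizable.

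If $\overline{\mathcal{F}}(u) = +\infty$ the claim is immediate: every $\{v_j\} \in \mathcal{C}(u)$ satisfies $\liminf_j \mathcal{F}(v_j) = +\infty$, so $\mathcal{F}(v_j) \to +\infty = \overline{\mathcal{F}}(u)$ and any such sequence already works. Assume therefore $\overline{\mathcal{F}}(u) < +\infty$. By the very definition of infimum, for each integer $k \geq 1$ one may pick $\{v^k_j\}_j \in \mathcal{C}(u)$ with
\[
\liminf_{j \to +\infty} \mathcal{F}(v^k_j) \leq \overline{\mathcal{F}}(u) + \frac{1}{k};
\]
after passing to a subsequence (which preserves membership in $\mathcal{C}(u)$, since weak convergence to $u$ persists) the $\liminf$ becomes a limit, and after discarding finitely many initial terms one may further assume $\mathcal{F}(v^k_j) \leq \overline{\mathcal{F}}(u) + \frac{2}{k}$ for every $j$.

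The coercivity in \eqref{HpF1} gives
\[
\nu \int_\Omega |Dv^k_j|^p \, dx \leq \mathcal{F}(v^k_j) \leq \overline{\mathcal{F}}(u) + 2,
\]
which, combined with $v^k_j - u_0 \in W^{1,p}_0(\Omega)$ and Poincar\'e's inequality, yields a $W^{1,p}$-bound $\|v^k_j\|_{W^{1,p}(\Omega)} \leq C$ uniform in $j,k$. Since $1 < p < +\infty$, $W^{1,p}(\Omega)$ is reflexive and separable, hence the weak topology on the closed ball of radius $C$ is metrizable by some metric $d$. For each fixed $k$, $v^k_j \rightharpoonup u$ implies $d(v^k_j, u) \to 0$ as $j \to \infty$, so I may choose $j_k$ with $d(v^k_{j_k}, u) < \frac{1}{k}$. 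The diagonal sequence $u_k := v^k_{j_k}$ then lies in $\mathcal{K}^*_\psi(\Omega)$, converges weakly to $u$ in $W^{1,p}(\Omega)$, and satisfies $\limsup_k \mathcal{F}(u_k) \leq \overline{\mathcal{F}}(u)$. Since $\{u_k\}$ itself is an admissible competitor in the infimum \eqref{relax}, the reverse bound $\liminf_k \mathcal{F}(u_k) \geq \overline{\mathcal{F}}(u)$ is automatic, and one concludes $\mathcal{F}(u_k) \to \overline{\mathcal{F}}(u)$.

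The main obstacle is precisely this diagonalization: the two asymptotic conditions---energy close to $\overline{\mathcal{F}}(u)$ and weak proximity to $u$---must be realized simultaneously by a single pair $(k, j_k)$, whereas each is only an asymptotic statement in $j$ for each fixed $k$. The coercivity from \eqref{HpF1} together with the fixed boundary datum $u_0$ confines all candidates in a common norm-bounded subset of $W^{1,p}(\Omega)$ on which the weak topology is metrizable, which converts the simultaneous-asymptotic problem into a clean metric-$\varepsilon$ choice; this is the only non-routine step in the plan.
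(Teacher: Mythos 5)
Your proposal is correct and follows essentially the same diagonalization strategy as the paper's proof: for each $k$ choose a $\frac{1}{k}$-optimal sequence $\{v^k_j\}_j\in\mathcal C(u)$, extract a diagonal term $u_k=v^k_{j_k}$ that is simultaneously energy-close and weakly close to $u$, and then obtain the reverse inequality $\liminf_k\mathcal F(u_k)\ge\overline{\mathcal F}(u)$ for free because the diagonal sequence is itself a competitor in the infimum defining $\overline{\mathcal F}(u)$. The only technical divergence is in how you quantify ``weakly close'': you invoke metrizability of the weak topology on norm-bounded subsets of the reflexive, separable space $W^{1,p}(\Omega)$, whereas the paper uses the Rellich compact embedding to pass to strong $L^p$-convergence, selects $h_k$ so that $\|u_{h_k}^{(k)}-u\|_{L^p}<\frac1k$, and then recovers weak $W^{1,p}$-convergence of the diagonal sequence from strong $L^p$-convergence plus the uniform $W^{1,p}$-bound. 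Both mechanisms are standard and buy the same thing, namely a single metric (or norm) quantity that one can make small at finite $j$; your route is marginally more abstract, the paper's is marginally more concrete, and either is fine. One small remark: in the case $\overline{\mathcal F}(u)=+\infty$ your argument presupposes $\mathcal C(u)\neq\emptyset$, otherwise no sequence can be chosen at all; the paper sidesteps this by only treating $\overline{\mathcal F}(u)<\infty$, and in the application of the lemma the conclusion is anyway trivially true when $\overline{\mathcal F}(v)=+\infty$, so this is harmless in context but worth being aware of.
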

\begin{proof}
Let $u \in \mathcal{K}_\psi(\Omega)$ such that $\overline{\mathcal{F}}(u) < \infty$. Then, for all $k$, there exists $u_h^{(k)} \in \mathcal{K}^*_\psi(\Omega)$ such that $u_h^{(k)} \rightharpoonup u$, as $h \rightarrow +\infty$, weakly in $W^{1,p}(\Omega)$ and
\begin{equation*}
\overline{\mathcal{F}}(u) \, \le \, \lim_{h \rightarrow +\infty} \int_\Omega f(x, Du_h^{(k)}) \, dx \, \le \, \overline{\mathcal{F}}(u) + \frac{1}{k}
\end{equation*}
Moreover, by the weak convergence of $u_h^{(k)}$ in $W^{1,p}(\Omega)$ we get
\begin{equation*}
\lim_{h \rightarrow +\infty} \|u_h^{(k)} - u\|_{L^p(\Omega)} \, = \, 0
\end{equation*}
and, for $h$ sufficiently large,
\begin{equation*}
\int_\Omega |Du_h^{(k)}|^p \, dx \, \le \, \int_\Omega f(x, Du_h^{(k)}) \, dx \, \le \, \overline{\mathcal{F}}(u) + 1
\end{equation*}
Then for all $k$ there exists $h_k$ such that for all $h \ge h_k$
\begin{equation*}
\|u_h^{(k)} - u\|_{L^p(\Omega)} \, < \, \frac{1}{k}
\end{equation*}
and for $h = h_k$, by denoting $w_k = u_{h_k}^{(k)}$, we have
\begin{equation*}
\|w_k - u\|_{L^p(\Omega)} \, < \, \frac{1}{k} \qquad \text{and} \qquad \int_\Omega |Dw_k|^p \, dx \, \le \, C
\end{equation*}
then $w_k \rightharpoonup u$, as $k \rightarrow +\infty$, in the weak topology of $W^{1,p}(\Omega)$ and
\begin{equation*}
\overline{\mathcal{F}}(u) \, \le \, \int_\Omega f(x, Dw_k) \, dx \, \le \, \overline{\mathcal{F}}(u) + \frac{1}{k}
\end{equation*}
i.e.
\begin{equation*}
\lim_{k \rightarrow +\infty} \int_\Omega f(x, Dw_k) \, dx \, = \, \overline{\mathcal{F}}(u)
\end{equation*}
\end{proof}

For the approximation we are gonna now consider the situation where the Lavrentiev phenomenon may occur. We are gonna use the following Theorem \ref{funz-approx} in order to rewrite $f$ through a suitable sequence of regular functions: that, together with Theorem \ref{lip-solution}, will permit us to prove that the supplementary assumption $u \in W^{1,\infty}_\loc(\Omega)$ we supposed in Section \ref{a-priori} can be removed.

\begin{theorem}
\label{funz-approx}
Let $f$ be satisfying the growth conditions \eqref{HpF1}, $f_{\xi \xi}$ and $f_{\xi x}$ be two Carath\'eodory functions, satisfying \eqref{HpF2} and \eqref{HpF3} and $f$ strictly convex at infinity. Then there exists a sequence of $\mathcal{C}^2-$functions $f^{l k} : \Omega \times \R^n \rightarrow [0, +\infty)$, $f^{l k}$ convex in the last variable and strictly convex at infinity, such that $f^{l k}$ converges to $f$ as $l \rightarrow \infty$ and $k \rightarrow \infty$ for a.e. $x \in \Omega$, for all $\xi \in \R^n$ and uniformly in $\Omega_0 \times K$, where $\Omega_0 \Subset \Omega$ and $K$ being a compact set of $\R^n$. Moreover the functions $f^{l k}$ satisfy the hypothesis \eqref{HpF1}, \eqref{HpF2}, \eqref{HpF3} with constants which are independent on $k$ and satisfy the additional hypothesis necessaries to conclude our proof with constants which are dependent only on $k$.
\end{theorem}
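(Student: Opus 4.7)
The plan is a two-parameter approximation: the index $l$ regularizes the $x$-dependence by mollification, while the index $k$ enforces the supplementary properties (global $\mathcal{C}^2$-smoothness, strict convexity with a uniform modulus, and bounded Hessian on compact sets) that are needed in order to legitimize the second variation argument of Section \ref{a-priori}.

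First I would extend $f$ to $\bar f : \R^n \times \R^n \to [0,+\infty)$ preserving convexity in $\xi$, pointwise bounds, and Sobolev regularity in $x$ (for instance by reflection across $\partial \Omega$, then by multiplying by a cut-off), and I would likewise extend $h$ to a function in $L^r(\R^n)$. Let $\{\phi_l\}_{l \in \N}$ be a family of standard Friedrichs mollifiers on $\R^n$ and set
\begin{equation*}
f^{l}(x, \xi) := \int_{\R^n} \phi_l(x - y)\, \bar f(y, \xi)\, dy.
\end{equation*}
Each $f^l$ is $\mathcal{C}^\infty$ in $x$, convex and strictly convex at infinity in $\xi$, and inherits from $f$ the bounds \eqref{HpF1}, \eqref{HpF2}, \eqref{HpF3} with the same constants $\nu, L$ and with $h$ replaced by $h^l := \phi_l * h$, which still belongs to $L^r_\loc(\Omega)$ with a local norm controlled by $\|h\|_{L^r_\loc}$. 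Convexity in $\xi$ and the two-sided bounds are preserved because they are stable under averaging in $x$.

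Next, to supply the additional regularity required for the a-priori estimate, I would add a strictly convex perturbation depending on $k$:
\begin{equation*}
f^{l k}(x, \xi) := f^l(x, \xi) + \frac{1}{k}\,(1 + |\xi|^2)^{q/2}.
\end{equation*}
The added term has growth exactly of order $q$ and Hessian of order $(1 + |\xi|^2)^{(q-2)/2}$, so the two-sided bound \eqref{HpF2} is preserved up to modifying $L$ by an absolute factor \emph{independent} of $k$; the mixed derivative $f^{lk}_{x\xi}$ coincides with $f^l_{x\xi}$ and thus \eqref{HpF3} holds with the same $h^l$. At the same time, the perturbation guarantees strict convexity with modulus at least $\frac{1}{k}(1+|\xi|^2)^{(q-2)/2}|\lambda|^2$, gives $f^{lk} \in \mathcal{C}^2(\Omega \times \R^n)$ with second derivatives bounded on every compact set by a constant depending on $k$ only, and makes the associated Euler-Lagrange system uniformly elliptic on bounded sets; these are exactly the supplementary properties used in Section \ref{lineariz} to produce the smooth solution on which the estimate \eqref{a-priori} is justified.

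Finally, I would verify the convergence claim: sending $l \to \infty$ first, standard mollifier theory gives $f^l(x,\xi) \to f(x,\xi)$ for a.e.\ $x$ and every $\xi$, and uniformly on $\Omega_0 \times K$ by the continuity of $f(x,\cdot)$ together with local equicontinuity provided by the upper bound in \eqref{HpF2}; then sending $k \to \infty$ the perturbation $\frac{1}{k}(1+|\xi|^2)^{q/2}$ tends to zero uniformly on the compact set $K$. The main technical obstacle is precisely the bookkeeping of the two parameters: the bounds \eqref{HpF1}-\eqref{HpF3} must be uniform in \emph{both} $l$ and $k$ (so that the a-priori estimate of Theorem \ref{main-result} yields constants independent of the approximation), while the strict convexity modulus and the $\mathcal{C}^2$-bounds used only to \emph{start} the approximation scheme are allowed to degenerate as $k \to \infty$. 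This separation is built into the construction because the $k$-perturbation is placed in front of an expression whose growth already saturates \eqref{HpF1}-\eqref{HpF3}, while the $l$-mollification is a purely qualitative improvement of the $x$-regularity that does not affect any of the structural constants.
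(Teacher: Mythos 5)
Your proposal captures the architecture of the paper's construction exactly: mollify the $x$-dependence (index $l$), add the perturbation $\frac{1}{k}(1+|\xi|^2)^{q/2}$ (index $k$), and check that the structural bounds \eqref{HpF1}--\eqref{HpF3} survive with constants uniform in both parameters while the ``starting'' regularity is permitted to depend on $k$. Two points of divergence, however, deserve attention.

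First, the paper mollifies in \emph{both} variables simultaneously, defining $f^l(x,\xi) = \int_{B\times B}\rho(y)\rho(\eta)\, f(x+\varepsilon_l y, \xi+\varepsilon_l\eta)\,d\eta\,dy$, whereas you mollify only in $x$. Since the hypotheses already grant $f\in \mathcal{C}^2$ in $\xi$, your choice can be made to produce jointly $\mathcal{C}^2$ approximants, so this is a legitimate variation; the paper's choice is the standard one in the cited references, is cleaner (it yields $\mathcal{C}^\infty$ directly without invoking the pre-existing $\xi$-regularity), and sidesteps any bookkeeping about joint continuity of mixed derivatives.

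Second, and more substantively, your preliminary ``extend $\bar f$ by reflection and multiply by a cut-off'' step does not work as written: a cut-off in $x$ destroys the lower bound $\nu(1+|\xi|^2)^{p/2}\le f(x,\xi)$ wherever the cut-off falls below $1$, so the mollified $f^l$ would fail \eqref{HpF1} near that region (and reflection itself requires regularity of $\partial\Omega$ that is not assumed). The paper avoids the issue entirely: its $f^l(x,\xi)$ is defined only for $x$ at distance greater than $\varepsilon_l$ from $\partial\Omega$, which is harmless because the conclusion of the theorem only concerns convergence on compactly contained $\Omega_0\Subset\Omega$, and the a-priori estimate \eqref{a-priori-approx} to which this is fed is itself local. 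You should drop the extension altogether and mollify on the shrinking subdomains $\{x : \mathrm{dist}(x,\partial\Omega)>\varepsilon_l\}$; the rest of your argument then goes through as the paper's does.
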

\begin{proof}
We argue as in \cite{EMM16_b}, \cite{EMM3} and \cite{FFM02}. For the sake of completeness, we give a sketch of the arguments of the proof.
\\
Let $B$ be the unit ball of $\R^n$ centered in the origin and consider a positive decreasing sequence $\varepsilon_l \rightarrow 0$. We introduce
\begin{equation*}
f^l(x, \xi) \, = \, \int_{B \times B} \rho(y) \, \rho(\eta) \, f(x + \varepsilon_l \, y, \xi + \varepsilon_l \, \eta) \, d\eta \, dy
\end{equation*}
where $\rho$ is a positive symmetric mollifier and
\begin{equation}
\label{f-lk}
f^{l k}(x, \xi) \, = \, f^l(x, \xi) + \frac{1}{k} \, (1 + |\xi|^2)^\frac{q}{2}
\end{equation}
It is easy to check that the sequence $f^{l k}$ satisfies the hypothesis \eqref{HpF1}, \eqref{HpF2}, \eqref{HpF3} with constants which are independent on $k$ and satisfies the additional hypothesis necessaries to conclude our proof with constants which are dependent only on $k$. 
\end{proof}

\subsection{Proof of Theorem \ref{lip-solution}}

\noindent
Now we are ready to give the proof of Theorem \ref{lip-solution}. 

\begin{proof}
For $u_0 \in W^{1, q}(\Omega)$, let us consider the variational problems
\begin{equation}
\label{inf-f-lk}
\inf \left\{ \int_\Omega f^{l k}(x, Du) \, dx : u \in \mathcal{K}^*_\psi(\Omega) \right\}
\end{equation}
where $ f^{l k}$ are defined in \eqref{f-lk}. By semicontinuity arguments and direct methods of the Calculus of Variations, there exists $u^{l k} \in \mathcal{K}^*_\psi(\Omega)$, a solution to \eqref{inf-f-lk}. By the growth conditions and the minimality of $u^{l k}$, remembering that $u_0 \in \mathcal{K}^*_\psi(\Omega)$ because it is not restrictive to assume $u_0 \ge \psi$ as observed in Remark \ref{oss-classe-vuota}, we get
\begin{eqnarray*}
\int_\Omega |Du^{l k}|^p \, dx
&\le& \int_\Omega f^{l k}(x, Du^{l k}) \, dx \\
&\le& \int_\Omega f^{l k}(x, Du_0) \, dx \\
&=& \int_\Omega f^l(x, Du_0) \, dx + \frac{1}{k} \int_\Omega (1 + |Du_0|^2)^\frac{q}{2} \, dx
\end{eqnarray*}
Moreover, the properties of the convolutions imply that
\begin{equation*}
f^l(x, Du_0) \, \overset{l \rightarrow \infty}{\longrightarrow} \, f(x, Du_0) \qquad \text{a.e. in } \Omega
\end{equation*}
and since
\begin{equation*}
\int_\Omega f^l(x, Du_0) \, dx \, \le \, C \int_\Omega (1 + |Du_0|^2)^\frac{q}{2} \, dx
\end{equation*}
where is fundamental the hypothesis that $u_0 \in W^{1, q}(\Omega)$. By Lebesgue Dominated Covergence Theorem we deduce therefore
\begin{eqnarray*}
\lim_{l \rightarrow +\infty} \int_\Omega |Du^{l k}|^p \, dx
&\le& \lim_{l \rightarrow +\infty} \int_\Omega f^l(x, Du_0) \, dx + \frac{1}{k} \int_\Omega (1 + |Du_0|^2)^\frac{q}{2} \, dx \\
&=& \int_\Omega f(x, Du_0) \, dx + \frac{1}{k} \int_\Omega (1 + |Du_0|^2)^\frac{q}{2} \, dx
\end{eqnarray*}
By Theorem \ref{funz-approx}, the functions $f^{l k}$ satisfy \eqref{HpF1}, \eqref{HpF2} and \eqref{HpF3}, so we can apply the a-priori estimate \eqref{a-priori-approx} to $u^{l k}$ and obtain, by standard covering arguments for all $B \Subset \Omega$,
\begin{equation*}
\|Du^{l k}\|_{L^\infty(B)} \, \le \, C \, [\|1 + h_l\|_{L^r(\Omega)}]^{\alpha \, \gamma} \, \left\{ \int_\Omega [1 + f^{l k}(x, Du^{l k})] \, dx \right\}^\frac{\gamma}{p}
\end{equation*}
Since $\|1 + h_l\|_{L^r(\Omega)} = \|(1 + h)_l\|_{L^r(\Omega)} \le \|1 + h\|_{L^r(\Omega)}$, we obtain
\begin{eqnarray*}
\|Du^{l k}\|_{L^\infty(B)}
&\le& C \, [\|1 + h\|_{L^r(\Omega)}]^{\alpha \, \gamma} \, \left\{ \int_\Omega [1 + f^{l k}(x, Du^{l k})] \, dx \right\}^\frac{\gamma}{p} \\
&\le& C \, [\|1 + h\|_{L^r(\Omega)}]^{\alpha \, \gamma} \, \left\{ \int_\Omega [1 + f^l(x, Du_0) + \frac{1}{k} \, (1 + |Du_0|^2)^\frac{q}{2}] \, dx \right\}^\frac{\gamma}{p} \\
&\le& C \, [\|1 + h\|_{L^r(\Omega)}]^{\alpha \, \gamma} \, \left\{ \int_\Omega [1 + f(x, Du_0) + \frac{1}{k} \, (1 + |Du_0|^2)^\frac{q}{2}] \, dx \right\}^\frac{\gamma}{p}
\end{eqnarray*}
where $C, \gamma, \beta$ depend on $n, r, p, q, L, \nu, B, \Omega$ and on the local bound for $\|D \psi\|_{W^{1, r}}$ but are independent of $l, k$. Therefore we conclude that there exist $u^k \in \mathcal{K}_\psi(\Omega)$, for all $k \in \N$, such that
\begin{equation*}
u^{l k} \, \overset{l \rightarrow \infty}{\longrightarrow} \, u^k \, \text{ weakly in } W^{1, p}(\Omega)
\end{equation*}
\begin{equation*}
u^{l k} \, \overset{l \rightarrow \infty}{\longrightarrow} \, u^k \, \text{ weakly star in } W^{1, \infty}_\loc(\Omega)
\end{equation*}
and by the previous estimates
\begin{eqnarray*}
\|Du^k\|_{L^p(\Omega)}
&\le& \liminf_{l \rightarrow \infty} \|Du^{l k}\|_{L^p(\Omega)} \\
&\le& \int_\Omega f(x, Du_0) \, dx + \int_\Omega (1 + |Du_0|^2)^\frac{q}{2} \, dx
\end{eqnarray*}
and
\begin{eqnarray*}
\|Du^k\|_{L^\infty(B)}
&\le& \liminf_{l \rightarrow \infty} \|Du^{l k}\|_{L^\infty(B)} \\
&\le& C \, [\|1 + h\|_{L^r(\Omega)}]^{\alpha \, \gamma} \, \left\{ \int_\Omega [1 + f(x, Du_0) + \frac{1}{k} \, (1 + |Du_0|^2)^\frac{q}{2}] \, dx \right\}^\frac{\gamma}{p}
\end{eqnarray*}
Thus we can deduce that there exists, up to subsequences, $\overline{u} \in \mathcal{K}_\psi(\Omega)$, thanks to the fact that $\mathcal{K}_\psi(\Omega)$ is convex and close, such that
\begin{equation*}
u^k \rightarrow \overline{u} \, \text{ weakly in } W^{1, p}(\Omega)
\end{equation*}
\begin{equation*}
u^k \rightarrow \overline{u} \, \text{ weakly star in } W^{1, \infty}_\loc(\Omega)
\end{equation*}
Now, we have that the functional is lower semicontinuous in $W^{1,1}(\Omega)$ and $u^k \rightarrow \overline{u}$ weakly in $W^{1, p}(\Omega)$, so $u^k \rightarrow \overline{u}$ weakly in $W^{1,1}(\Omega)$ too.
\\
Hence, thanks to \eqref{vupi}, by compact embedding and noticing that
\begin{equation*}
\int_B |D(V_p(Du_\varepsilon(x)))|^2 \, dx \, \le \, C \, \left( \int_\Omega (1 + |Du_\varepsilon(x)|)^p \, dx \right)^\beta
\end{equation*}
with $C$ and $\beta$ depending on $n, r, p, q, \nu, L, R, \rho$, we infer
\begin{equation*}
V_p(Du^k) \rightharpoonup w \textnormal{ weakly in }W^{1,2}_{loc}(\Omega)
\end{equation*}
\begin{equation*}
V_p(Du^k) \rightarrow w \textnormal{ strongly in }L^{2}_{loc}(\Omega)
\end{equation*}
from which we deduce, together with inequality \eqref{stima_vupi},
\begin{equation*}
Du^k \rightarrow \overline{w} \textnormal{ strongly in } L^p_{loc}(\Omega)
\end{equation*}
We thus have the strong convergence 
\begin{equation*}
u^k \rightarrow \overline{u} \textnormal{ in } W^{1,p}_0(\Omega) + u_0
\end{equation*}
and the limit function $ \overline{u} $ still belongs to $K_{\psi}(\Omega)$, since this set is closed.
\noindent
\\
For any fixed $k \in \N$, using the uniform convergence of $f^l$ to $f$ in $\Omega_0 \times K$ (for any $K$ compact subset of $\R^n$) and the minimality of $u^{l k}$, we get for all $w \in \mathcal{K}^*_\psi(\Omega)$
\begin{eqnarray*}
\int_{\Omega_0} f(x, Du^k) \, dx
&\le& \liminf_{l \rightarrow \infty} \int_{\Omega_0} f(x, Du^{l k}) \, dx \\
&=& \liminf_{l \rightarrow \infty} \int_{\Omega_0} f^l(x, Du^{l k}) \, dx \\
&\le& \liminf_{l \rightarrow \infty} \int_{\Omega_0} f^l(x, Du^{l k}) \, dx + \frac{1}{k} \int_\Omega (1 + |Du^{l k}|^2)^\frac{q}{2} \, dx \\
&\le& \liminf_{l \rightarrow \infty} \int_\Omega f^l(x, Du^{l k}) \, dx + \frac{1}{k} \int_\Omega (1 + |Du^{l k}|^2)^\frac{q}{2} \, dx \\
&\le& \liminf_{l \rightarrow \infty} \int_\Omega f^l(x, Dw) \, dx + \frac{1}{k} \int_\Omega (1 + |Dw|^2)^\frac{q}{2} \, dx
\end{eqnarray*}
Then, for $\Omega_0 \rightarrow \Omega$, using Dominated Convergence Theorem, we have
\begin{equation*}
\int_\Omega f(x, Du^k) \, dx \, \le \, \int_\Omega f(x, Dw) \, dx + \frac{1}{k} \int_\Omega (1 + |Dw|^2)^\frac{q}{2} \, dx
\end{equation*}
By definition \eqref{relax}, we have
\begin{equation}
\label{min-relax}
\overline{\mathcal{F}}(\overline{u}) \, \le \, \liminf_{k \rightarrow \infty} \int_\Omega f(x, Du^k) \, dx \, \le \, \int_\Omega f(x, Dw) \, dx \qquad \forall \, w \in \mathcal{K}^*_\psi(\Omega)
\end{equation}
Let $v \in \mathcal{K}_\psi(\Omega)$. By Lemma \ref{conv-funz}, there exists $u_k \in \mathcal{K}^*_\psi(\Omega)$ such that $u_k \rightharpoonup v$ weakly in $W^{1, p}(\Omega)$ and
\begin{equation*}
\lim_{k \rightarrow \infty} \int_\Omega f(x, Du_k) \, dx \, = \, \overline{\mathcal{F}}(v)
\end{equation*}
By \eqref{min-relax},
\begin{equation*}
\overline{\mathcal{F}}(\overline{u}) \, \le \, \int_\Omega f(x, Du_k) \, dx
\end{equation*}
and we can conclude that
\begin{equation*}
\overline{\mathcal{F}}(\overline{u}) \, \le \, \lim_{k \rightarrow \infty} \int_\Omega f(x, Du_k) \, dx \, = \, \overline{\mathcal{F}}(v) \qquad \forall \, v \in \mathcal{K}_\psi(\Omega)
\end{equation*}
Then $\overline{u} \in W^{1, \infty}_\loc(\Omega)$ is a solution to the problem $\min \{ \overline{\mathcal{F}}(u) : u \in \mathcal{K}_\psi(\Omega) \}$.
\end{proof}



\begin{thebibliography}{99}

\bibitem{ABF03} \textsc{E. Acerbi, G. Bouchitt\'e, I. Fonseca:} \emph{Relaxation of convex functionals: the gap problem}, Ann. IHP Anal. Non Lin., {\bf 20}, (2003), 359--390.

\bibitem{BS19} \textsc{P. Bella, M. Sch\"{a}ffner:} \emph{On the regularity of minimizers for scalar integral functionals with $(p,q)-$growth}, Analysis \& PDE, (2019), to appear. https://arxiv.org/abs/1904.12279v1

\bibitem{BC20} \textsc{C. Benassi, M. Caselli:} \emph{Lipschitz continuity results for obstacle problems}, Rendiconti Lincei, Matematica e Applicazioni, {\bf 31} (1), (2020), 191--210.

\bibitem{BR1} \textsc{G. Bertazzoni, S. Riccò:} \emph{Lipschitz regularity results for a class of obstacle problems with nearly linear growth}, J. Elliptic \& Parabolic Eq., {\bf 6} (2), (2020), 883-918.

\bibitem{BM76} \textsc{L. Boccardo, P. Marcellini:} \emph{Sulla convergenza delle soluzioni di disequazioni variazionali}, Ann. Mat. Pura Appl., {\bf 31} (1), (2020), 191--210.

\bibitem{BDM13} \textsc{V. B\"ogelein, F. Duzaar, P. Marcellini:} \emph{Parabolic systems with $p, q$-growth: A variational approach}, Arch. Ration. Mech. Anal., {\bf 110} (4), (1976), 137--159.

\bibitem{BB} \textsc{G. Buttazzo, M. Belloni:} \emph{A survey of old and recent results about the gap phenomenon in the Calculus of variations}, in \textsc{R. Lucchetti, J. Revalski (Eds.):} \emph{Recent Developments in Well-posed Variational Problems}, Mathematical Applications, Vol. 331, Kluwer Academic Publishers, Dordrecht, (1995), 1--27.

\bibitem{BM} \textsc{G. Buttazzo, V. J. Mizel:} \emph{Interpretation of the Lavrentiev Phenomenon by relaxation}, J. Functional Anal., {\bf 110} (2), (1992), 434--460.

\bibitem{CKP14} \textsc{M. Carozza, J. Kristensen, A. Passarelli di Napoli:} \emph{Regularity of minimizers of autonomous convex variational integrals}, Ann. Sc. Norm. Super. Pisa Cl. Sci., {\bf XIII} (5), (2014), 1065--1089.

\bibitem{CKP18} \textsc{M. Carozza, J. Kristensen, A. Passarelli di Napoli:} \emph{On the validity of the Euler Lagrange system}, Comm. Pure Appl. Anal., {\bf 14} (1), (2018), 51--62.

\bibitem{CEP19} \textsc{M. Caselli, M. Eleuteri, A. Passarelli di Napoli:} \emph{Regularity results for a class of obstacle problems with $p,q-$growth conditions}, preprint arXiv:1907.08527, (2019).

\bibitem{CM15-1} \textsc{M. Colombo, G. Mingione:} \emph{Regularity for double phase variational problems}, Arch. Rat. Mech. Anal., {\bf 215} (2), (2015), 443--496.

\bibitem{CGGP} \textsc{G. Cupini, F. Giannetti, R. Giova, A. Passarelli di Napoli:} \emph{Regularity results for vectorial minimizers of a class of degenerate convex integrals}, J. Diff. Equ., {\bf 265}, (2018), no. 9, 4375--4416.

\bibitem{DF19} \textsc{C. De Filippis:} \emph{Regularity results for a class of nonautonomous obstacle problem with $(p, q)-$growth}, J. Math. Anal. Appl., to appear. https://doi.org/10.1016/j.jmaa.2019.123450

\bibitem{DFP19} \textsc{C. De Filippis, G. Palatucci:} \emph{H\"older regularity for nonlocal double phase equations}, J. Diff. Equ., {\bf 267} (1), (2018), 547--586.

\bibitem{D87} \textsc{F. Duzaar:} \emph{Variational inequalities for harmonic maps}, J. Reine Angew. Math., {\bf 374}, (1987), 39--60.

\bibitem{EMM16_a} \textsc{M. Eleuteri, P. Marcellini, E. Mascolo:} \textit{Lipschitz continuity for energy integrals with variable exponents}, Atti Accad. Naz. Lincei Rend. Lincei Mat. Appl., {\bf 27}, (2016), no. 1, 61--87.

\bibitem{EMM16_b} \textsc{M. Eleuteri, P. Marcellini, E. Mascolo:} \textit{Lipschitz estimates for systems with ellipticity conditions at infinity}, Ann. Mat. Pura Appl., {\bf 195} (4), (2016), no. 5, 1575--1603.

\bibitem{EMM19} \textsc{M. Eleuteri, P. Marcellini, E. Mascolo:} \emph{Local Lipschitz continuity of minimizers with mild assumptions on the $x-$dependence}, Discrete Contin. Dyn. Syst. Ser. S, {\bf 12}, (2019), no. 2, 251--265.

  \bibitem{EMM3} \textsc{M. Eleuteri, P. Marcellini, E. Mascolo:} \emph{Regularity for scalar integrals without structure conditions}, Adv. Calc. Var., {\bf 13} (3), (2020), 279-300.

\bibitem{EPdN21} \textsc{M. Eleuteri, A. Passarelli di Napoli:} \emph{On the validity of variational inequalities for obstacle problems with non-standard growth}, submitted.

\bibitem{ELM04} \textsc{L. Esposito, F. Leonetti, G. Mingione:} \emph{Sharp regularity for functionals with $(p,q)$ growth}, J. Differential Equations, {\bf 204}, (2004), 5--55.

\bibitem{ELP} \textsc{A. Esposito, F. Leonetti, P. V. Petricca:} \emph{Absence of Lavrentiev gap for non-autonomous functionals with $(p,q)-$growth}, Adv. Nonlinear Anal., {\bf 8} (1), (2019), 73--78.

\bibitem{FFM02} \textsc{I. Fonseca, N. Fusco, P. Marcellini:} \emph{An existence result for a nonconvex variational problem via regularity}, ESAIM Control Optim. Calc. Var., {\bf 7}, (2002), 69--95.

\bibitem{F85} \textsc{M. Fuchs:} \emph{Variational inequalities for vector valued functions with non convex obstacles}, Analysis, {\bf 5}, (1985), 223--238.

\bibitem{F90} \textsc{M. Fuchs:} \emph{p-harmonic obstacle problems. Part I: Partial regularity theory}, Ann. Mat. Pura Appl., {\bf 156}, (1990), 127--158.

\bibitem{F94} \textsc{M. Fuchs:} \emph{Topics in the Calculus of Variations}, Advanced Lectures in Mathematics, Vieweg, (1994).

\bibitem{FG} \textsc{M. Fuchs, L. Gongbao:} \emph{Variational inequalities for energy functionals with nonstandard growth conditions}, Abstr. Appl. Anal., {\bf 3}, (1998), 41--64.

\bibitem{G-stime} \textsc{C. Gavioli:} \emph{A priori estimates for solutions to a class of obstacle problems under $p, q-$growth conditions}, J. Elliptic Parabol. Equ., 5 (2), (2019), 325-437.

\bibitem{G21} \textsc{C. Gavioli:} \emph{Higher differentiability for a class of obstacle problems with nearly linear growth conditions}, Rendiconti Lincei, Matematica e Applicazioni, to appear.

\bibitem{Giusti} \textsc{E. Giusti:} \emph{Direct methods in the calculus of variations}, World scientific publishing Co., (2003).

\bibitem{HH19} \textsc{P. Hariulehto, P. H\"ast\"o:} \emph{Double phase image restoration}, (2019), preprint. arXiv:1906.09837.

\bibitem{L26} \textsc{M. Lavrentiev:} \emph{Sur quelques problemes du calcul des variations}, Ann. Math. Pura Appl. {\bf 4}, (1926), 107--124.

\bibitem{M86} \textsc{P. Marcellini:} \emph{On the definition and the lower semicontinuity of certain quasiconvex integrals}, Ann. I. Poincar\'e section C, tome 3, no. 5, (1986), 391--409.

\bibitem{M87} \textsc{P. Marcellini:} \emph{Un example de solution discontinue d'un probl\`eme variationnel dans le cas scalaire}, Preprint 11, Istituto Matematico ``U. Dini'', Universit\`a di Firenze, (1987).

\bibitem{M89} \textsc{P. Marcellini:} \emph{Regularity of minimizers of integrals of the calculus of variations with nonstandard growth conditions}, Arch. Ration. Mech. Anal., {\bf 105}, (1989), no. 3, 267--284.

\bibitem{M91} \textsc{P. Marcellini:} \emph{Regularity and existence of solutions of elliptic equations with $p, q-$growth conditions}, J. Differential Equations, {\bf 90}, (1991), no. 1, 1--30.

\bibitem{M93} \textsc{P. Marcellini:} \emph{Regularity for elliptic equations with general growth conditions}, J. Differential Equations, {\bf 105}, (1993), no. 2, 296--333.

\bibitem{M19} \textsc{P. Marcellini:} \emph{A variational approach to parabolic equations under general and $p, q-$growth conditions}, Nonlinear Anal., {\bf 194}, (2020), 111456.

\bibitem{M20} \textsc{P. Marcellini:} \emph{Regularity under general and $p,q-$growth conditions}, Discrete Cont. Din. Systems, S Series, {\bf 13}, (2020), 2009--2031.


\bibitem{RT19} \textsc{M.A. Ragusa, A. Tachikawa:} \emph{Regularity for minimizers for functionals of double phase with variable exponents}, Advances in Nonlinear Analysis, {\bf 9} (1), (2019), 710--728.

\bibitem{U77} \textsc{K. Uhlenbeck:} \emph{Regularity for a class of non-linear elliptic systems}, Acta Math., {\bf 138}, (1977), 219--240.

\bibitem{Zhikov} \textsc{V.V. Zhikov:} \emph{On Lavrentiev phenomenon}, Russian J. Math. Phys., {\bf 3}, (1995), 249--269.

\end{thebibliography}
\end{document}